\theoremstyle{plain}
\newtheorem{thm}{Theorem}[section]
\newtheorem{lem}[thm]{Lemma}
\newtheorem{prop}[thm]{Proposition}
\newtheorem{cor}[thm]{Corollary}
\theoremstyle{definition}
\newtheorem{defn}[thm]{Definition}
\theoremstyle{remark}
\newtheorem*{rem}{Remark}
\newcommand{\nc}{\newcommand}
\nc{\dmo}{\DeclareMathOperator}
\DeclareMathOperator{\Diff}{Diff}
\DeclareMathOperator{\Fix}{Fix}
\DeclareMathOperator{\Mod}{Mod}
\DeclareMathOperator{\fix}{Fix}
\DeclareMathOperator{\Homeo}{Homeo}
\newcommand{\discColor}{Black}
\newcommand{\discSizeX}{2.1}
\nc{\para}[1]{\medskip\noindent\textbf{#1.}}
\title{On the non-realizability of braid groups by homeomorphisms}
\author{Lei Chen}
\begin{document}
\maketitle
\begin{abstract}
In this paper, we show that the projection $\Homeo_+(D^2_n)\to B_n$ does not have a section for $n\ge 6$; i.e., the braid group $B_n$ cannot be geometrically realized as a group of homeomorphisms of a disk fixing the boundary point-wise and $n$ marked points in the interior as a set. We also give a new proof of a result of Markovic \cite{Mar} that the mapping class group of a surface of genus $g$ cannot be geometrically realized as a group of homeomorphisms when $g\ge 2$.
\end{abstract}
\section{introduction}
Let $S_{g;m_1,...,m_r}^b$ be a surface of genus $g$ with $r$ sets of marked points and $b$ boundary components such that the $i$th set contains $m_i$ points. We omit the index $m_i$ and $b$ whenever they are zero. Let $\Homeo_+(S_{g;m_1,...,m_r}^b)$ be the group of orientation-preserving homeomorphisms of $S_{g;m_1,...,m_r}^b$ fixing $b$ boundary components point-wise and $r$ sets of points set-wise. Let $\Mod(S_{g;m_1,...,m_r}^b)$ be the \emph{mapping class group} of $S_{g;m_1,...,m_r}^b$; i.e.,
\[
\Mod(S_{g;m_1,...,m_r}^b):=\pi_0(\Homeo_+(S_{g;m_1,...,m_r}^b)).
\] There is an associated projection 
\[
pr_{g;m_1,...,m_r}^b:\Homeo_+(S_{g;m_1,...,m_r}^b)\to \Mod(S_{g;m_1,...,m_r}^b).
\]
In this paper, we establish the following result.
\begin{thm}\label{main}
The projections $pr_{0;n}, pr_{0;n,1}$ and $pr_{0;n}^1$ do not have sections for $n\ge 6$.
\end{thm}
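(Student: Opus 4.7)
The plan is to argue by contradiction, focusing on the disk case $pr_{0;n}^1 : \Homeo_+(D^2_n) \to B_n$. The other two projections, whose targets are related to $B_n$ through the Birman exact sequences
\[
1 \to \pi_1(S_{0;n}) \to \Mod(S_{0;n,1}) \to \Mod(S_{0;n}) \to 1, \qquad 1 \to \mathbb{Z} \to B_n \to \Mod(S_{0;n,1}) \to 1,
\]
should be handled by analogous arguments, or by transferring a hypothetical section after controlled modifications involving the point-pushing and boundary-twist kernels.

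Now assume $s : B_n \to \Homeo_+(D^2_n)$ is a section for some $n \geq 6$. The key geometric configuration is to choose two disjoint embedded subdisks $D_1, D_2 \subset D^2$ each containing exactly three of the marked points. Braids supported inside $D_i$ yield an embedded subgroup $B_3^{(i)} \hookrightarrow B_n$, and $B_3^{(1)}$ commutes with $B_3^{(2)}$ elementwise. Applying $s$, the two subgroups $s(B_3^{(1)})$ and $s(B_3^{(2)})$ commute elementwise inside $\Homeo_+(D^2_n)$, and each fixes the three marked points of the other subdisk pointwise. Choosing $B_3$ rather than $B_2$ is crucial because $B_3$ contains pseudo-Anosov elements, providing the rich dynamics needed in the next step.

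The heart of the argument studies the central full twist $\Delta_i^2 \in Z(B_3^{(i)})$. As a mapping class it is a Dehn twist about a simple closed curve $\gamma_i \subset D_i$, so $s(\Delta_i^2)$ must realize this twisting behavior in a canonical annular region around a curve isotopic to $\gamma_i$. At the same time, $s(\Delta_i^2)$ commutes with every element of $s(B_3^{(j)})$ for $j \neq i$, which contains pseudo-Anosov elements acting with rich dynamics on $D_j$. Applying Brouwer-type fixed-point theory to this commuting pair, I would argue that $s(\Delta_i^2)$ is forced to act trivially on subsets that are incompatible with its prescribed global isotopy class, producing the desired contradiction.

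The principal obstacle is converting this coarse picture into a rigorous topological contradiction. One must extract a dynamical invariant---a fixed-point index on $\Fix(s(\Delta_i^2))$, a rotation number on an invariant essential circle, or a Nielsen-realization-style obstruction---that distinguishes the behavior a Dehn-twist realization must exhibit from the behavior compatible with commuting elementwise with a faithful action of $B_3$ on a disjoint subdisk. The hypothesis $n \geq 6$ enters precisely to permit the two disjoint three-point subdisks that supply the commuting pair of $B_3$-subgroups on which the argument pivots; smaller $n$ would only allow commuting abelian subgroups, which lack the dynamical richness to drive the contradiction.
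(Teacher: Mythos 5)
Your proposal contains a genuine gap at its heart, and you flag it yourself (``the principal obstacle is converting this coarse picture into a rigorous topological contradiction''). The central mechanism---deducing a contradiction from the fact that $s(\Delta_1^2)$ commutes elementwise with $s(B_3^{(2)})$---does not go through. A homeomorphism can perfectly well represent a Dehn twist about $\gamma_1$ while being the identity outside a small neighborhood of $D_1$; such a homeomorphism automatically commutes with anything supported in $D_2$. Thus elementwise commutation with $s(B_3^{(2)})$ imposes no constraint whatsoever on $s(\Delta_1^2)$ in the region where the pseudo-Anosov dynamics live, and no fixed-point index, rotation number, or Nielsen obstruction will fire without a mechanism forcing $s(\Delta_1^2)$ to act nontrivially near $D_2$. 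The underlying difficulty is that $B_n$ is torsion-free, so $\Delta_i^2$ is infinite order and the Ahlfors' trick (which is what actually pins down realizations rigidly) is unavailable; commutation alone is simply too soft a relation among homeomorphisms to exploit. The treatment of $pr_{0;n}$ and $pr_{0;n,1}$ via ``controlled modifications'' of the Birman kernels is also left entirely unspecified, and in fact the boundary-twist kernel is exactly where the real work lies.

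The paper's route is quite different. Since $B_n$ has no torsion, it first transfers the hypothetical section to a setting where torsion exists: embed $B_6=\Mod(S_{0;6}^1)$ into $B_n$ (surrounding $6$ of the marked points by a curve---this is what $n\ge 6$ really buys, not room for two commuting $B_3$'s), compose with $\mathcal{E}$, forget extra points, and pinch the boundary to get $\rho:\Mod(S_{0;6}^1)\to\Homeo_+(S_{0;6,1})$. The center $T_b$ does not map to the identity under $\rho$, so this is not yet a section of $pr_{0;6,1}$; Markovic's minimal decomposition theory plus Moore's theorem are then used to collapse $\rho(T_b)$ canonically, producing an actual section of $pr_{0;6,1}$. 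That group \emph{does} have torsion, and the paper's local argument uses two rotations of orders $5$ and $6$ together with the Ahlfors' trick: the order-$6$ rotation forces a global fixed point $A$ preserved by the subgroup generated by two centralizers (shown by explicit braid-word manipulation to be the whole group), while the order-$5$ rotation cannot fix $A$. Your instinct to localize and exploit rigidity is right, but the rigidity must come from torsion, which has to be manufactured by passing to the capped-off quotient rather than found inside $B_n$ itself.
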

The above theorem answers Question 3.11 in the survey of Mann--Tshishiku \cite{MT} and generalizes Salter--Tshishiku \cite{ST}. Let $\tau$ be the hyper-elliptic involution as in the following figure. 
\begin{figure}[h]
   \labellist 
  \small\hair 2pt
     \pinlabel \Large $\tau$ at 410 70
   \endlabellist
     \centerline{ \mbox{
 \includegraphics[width = 5in]{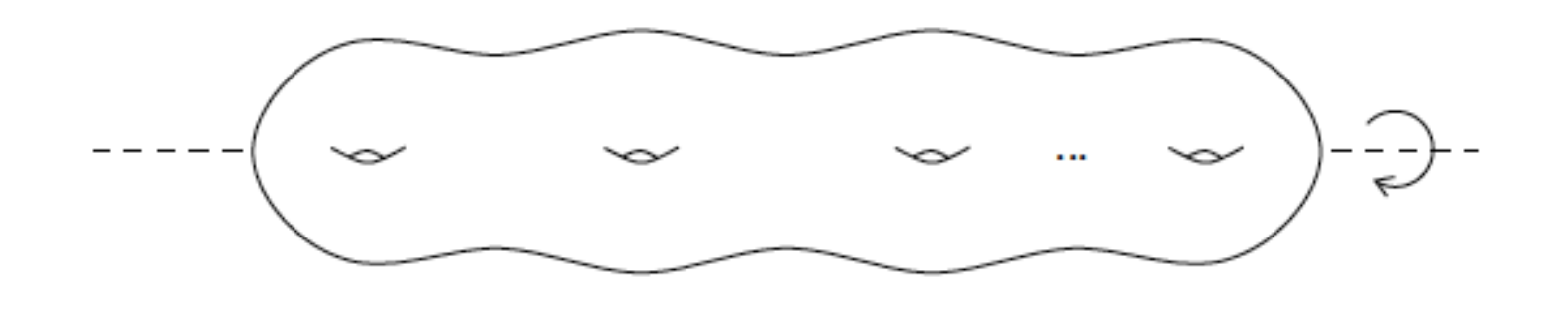}}}
 \caption{The hyper-elliptic involution $\tau$}
  \end{figure}

Let $\mathcal{H}_g<\Mod(S_g)$ be the \emph{hyper-elliptic mapping class group}, i.e., the centralizer of $\tau\in \Mod(S_g)$. Markovic \cite{Mar} proved that the whole mapping class group $\Mod(S_g)$ cannot be realized geometrically; i.e., $pr_g$ does not have a section. We have the following generalization to the infinite index subgroup $\mathcal{H}_g$.
\begin{cor}\label{nielsen}
The projection $pr_g$ does not have a section over the subgroup $\mathcal{H}_g$ for $g\ge 2$. In particular, $pr_g$ has no section for $g\ge 2$. 
\end{cor}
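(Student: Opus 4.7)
The plan is to reduce Corollary~\ref{nielsen} to Theorem~\ref{main} via the Birman--Hilden double cover. Since any section of $pr_g$ restricts to a section of $pr_g$ over $\mathcal{H}_g$, it suffices to prove the first assertion; the second will follow by restriction.

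Suppose for contradiction that a section $\sigma\colon \mathcal{H}_g \to \Homeo_+(S_g)$ exists. My first step is to arrange that $\sigma(\tau)=\tau$ on the nose. Because $\sigma$ is a homomorphism, $\sigma(\tau)$ is an orientation-preserving involution of $S_g$ that is isotopic to the standard hyperelliptic involution $\tau$, hence (by the classical theory of finite cyclic actions on surfaces, e.g.\ Brouwer--Kerékjártó) conjugate to $\tau$ by some $\varphi\in \Homeo_+(S_g)$. Replacing $\sigma$ by $\varphi\,\sigma(\cdot)\,\varphi^{-1}$ I may assume $\sigma(\tau)=\tau$.

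With this normalization, and because $\tau$ is central in $\mathcal{H}_g$, every $\sigma(h)$ commutes with $\tau$ and therefore descends along the branched double cover $p\colon S_g \to S_g/\langle\tau\rangle = S^2$ to an orientation-preserving homeomorphism $\bar\sigma(h)$ of $S^2$ preserving the set of $2g+2$ Weierstrass points. This defines a homomorphism $\bar\sigma\colon \mathcal{H}_g \to \Homeo_+(S_{0;2g+2})$ with $\tau$ in its kernel. The Birman--Hilden theorem provides an isomorphism $\mathcal{H}_g/\langle\tau\rangle \cong \Mod(S_{0;2g+2})$ that is compatible with $\bar\sigma$ and $pr_{0;2g+2}$, so $\bar\sigma$ factors through $\Mod(S_{0;2g+2})$ and yields a bona fide section of $pr_{0;2g+2}$. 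Since $g\geq 2$ forces $2g+2\geq 6$, this contradicts Theorem~\ref{main}.

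The main technical point is the normalization step: one must pin $\sigma(\tau)$ down to the literal hyperelliptic involution, not merely an involution in the same mapping class, in order for the quotient to coincide with the standard sphere used in the Birman--Hilden identification. This rests on the classical fact that orientation-preserving involutions of $S_g$ with $2g+2$ fixed points are unique up to conjugation in $\Homeo_+(S_g)$; once this is granted, the rest of the argument is a diagram chase through the Birman--Hilden square.
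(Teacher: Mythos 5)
Your proposal is correct and takes essentially the same route as the paper: both reduce to Theorem~\ref{main} for $\Mod(S_{0;2g+2})$ via the Birman--Hilden double cover, using uniqueness of the topological realization of the hyperelliptic involution. One small technical point: after conjugating by $\varphi$, the map $h\mapsto\varphi\,\sigma(h)\,\varphi^{-1}$ satisfies $pr_g\circ(\varphi\sigma\varphi^{-1}) = c_{[\varphi]}$ (conjugation by the class $[\varphi]$), which is not the identity on $\mathcal{H}_g$; however, since $\sigma(\tau)$ is isotopic to $\tau$, one sees that $[\varphi]\in\mathcal{H}_g$, so precomposing with $c_{[\varphi]}^{-1}$ restores an honest section with $\sigma(\tau)=\tau$. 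The paper sidesteps this normalization by working directly with the centralizer $\Homeo_+(S_g)(\widetilde{\tau})$ of the realized involution $\widetilde{\tau}$ and the pullback square, but the content is the same.
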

This extends the result of Markovic--Saric \cite{MS} that $\mathcal{H}_2$ cannot be realized geometrically and also gives a new proof of Markovic \cite{Mar} that mapping class group cannot be realized.

\para{Historic remark}
The Nielsen realization problem for $S_{g;m_1,...,m_r}^b$ asks if there exists a section of $pr_{g;m_1,...,m_r}^b$ over a subgroup of $\Mod(S_{g;m_1,...,m_r}^b)$. Nielsen (1943) posed this question for finite subgroups first and Kerckhoff \cite{Kerk} showed that a lift always exists for finite subgroups of $\Mod(S_g)$. The first result on the Nielsen realization problem for the whole mapping class group is a theorem of Morita \cite{Mor} that there is no section for the projection $\Diff^2_+(S_g)\to \Mod(S_g)$ when $g\ge 18$. Then Markovic \cite{Mar} (further extended by Markovic--Saric \cite{MS} on the genus bound)  showed that $pr_g$ does not have a section for $g\ge 2$. Franks--Handel \cite{FH}, Bestvina--Church--Suoto \cite{BCS} and Salter--Tshishiku \cite{ST} also obtained the non-realization theorems for $C^1$ diffeomorphisms. We refer the readers to the survey paper of Mann--Tshishiku \cite{MT} for more history and previous ideas.

\para{Idea of the proof} Our proof essentially uses \emph{torsion elements} (i.e., finite-order elements) of the corresponding mapping class group. The main observation is that the torsion elements in mapping class groups are not compatible with each other. By the Ahlfors' trick, which states that a torsion element in a mapping class group has a unique realization up to conjugation, we reach a contradiction by finding a global fixed point. To make use of our argument on a torsion-free group like the braid group $\Mod(S_{0;n}^1)$, we use the minimal decomposition theory of Markovic \cite{Mar} to modify the realization and apply the same strategy.

\para{Connection with Markovic's work \cite{Mar}}
To prove that $pr_{0;6}$ and $pr_{0;6,1}$ have no sections, we only use the the group structure and the Ahlfors' trick. The difficulty in other cases like $pr_{0;n}^1$ is the lack of torsion elements. For example, the braid group $\Mod(S_{0;n}^1)$ is torsion-free. Markovic's minimal decomposition theory gives us a tool to modify the action to obtain finite action. This is one of the novelty of this paper. 

The difference between our work and \cite{Mar}, \cite{MS} lies in the final contradiction. They used many relations like braid relation, chain relation and directly use the Ahlfors' trick on torsion elements. We only make use of two special torsion elements. However instead of directly having torsion elements, we have to make torsion elements appear by applying the minimal decomposition theory. The proof in this paper is conceivably much simpler.

\para{Structure of the paper} In Section 2, we give a local argument showing that the projection $pr_{0;1,6}$ does not have a section using torsion elements. In Section 3, we define minimal decomposition and prove Theorem \ref{main}, Corollary \ref{nielsen} by using a technical theorem which is a consequence of the minimal decomposition theory. We then prove the technical theorem in Section 4.

\para{Acknowledgements} This project obtained ideas from a previous paper with Nick Salter \cite{CS} about torsion elements of spherical braid group. She thanks Benson Farb, Nick Salter and Bena Tshishiku for asking the question about lifting braid group in Oberwalfach 2016 conference on surface bundles; she thanks Benson Farb, Dan Margalit and Nick Salter for discussions and comments on the paper. She would also like to thank Vlad Markovic for very useful discussions and the anonymous referee for suggestions on the paper. 
 
\section{a local argument}
In this section, we give a local argument showing that the projection $pr_{0;1,6}$ and $pr_{0;6}$ do not have sections. The following is an old theorem of Ahlfors on the uniqueness of Nielsen realization for finite subgroups; see, e.g., Markovic \cite[Proposition 1.1]{Mar}. Let $f\in \Mod(S_{g;m_1,...,m_r}^b)$ be a finite order mapping class. A homeomorphism representative of $f$ is a finite order element $h\in \Homeo_+(S_{g;m_1,...,m_r}^b)$ such that $h$ is homotopic to $f$ and has the same order as $f$.
\begin{prop}[Ahlfors' trick]
Let $f\in \Mod(S_{g;m_1,...,m_r}^b)$ be a finite order mapping class, then $f$ has a unique homeomorphism representative up to conjugation in $\Homeo_+(S_{g;m_1,...,m_r}^b)$.
\end{prop}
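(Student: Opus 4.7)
Let $h_1, h_2 \in \Homeo_+(S)$, writing $S := S_{g;m_1,\dots,m_r}^b$, be two finite-order representatives of $f$, both of order $k = \mathrm{ord}(f)$. The plan is to build $\phi \in \Homeo_+(S)$ with $\phi h_1 \phi^{-1} = h_2$ as a lift of an orbifold isomorphism between the quotients $O_i := S/\langle h_i\rangle$.

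First I would identify the quotient orbifolds. Each $h_i^j$ with $1 \le j < k$ has finite fixed set on $S$ (by a standard Smith-theoretic or elementary surface argument), so each projection $\pi_i : S \to O_i$ is a regular cyclic orbifold cover of degree $k$ with cone points at the images of nontrivial stabilizers. Since $h_1$ is isotopic to $h_2$, they induce the same outer automorphism of $\pi_1(S)$ and in particular the same Lefschetz data for every power; Nielsen's classical analysis of periodic surface maps then forces $h_1$ and $h_2$ to have matching fixed-point data (number of fixed points of each local rotation type). Combined with Riemann--Hurwitz, this shows $O_1$ and $O_2$ have the same underlying topological type and matching cone data, so there is an orientation-preserving orbifold homeomorphism $\bar\phi : O_1 \to O_2$. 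After possibly precomposing with an orbifold automorphism of $O_1$, I can arrange that $\bar\phi$ identifies the classifying surjections $\rho_i : \pi_1^{\mathrm{orb}}(O_i) \twoheadrightarrow \mathbb{Z}/k$, and then $\bar\phi$ lifts to some $\phi \in \Homeo_+(S)$ that conjugates $\langle h_1\rangle$ onto $\langle h_2\rangle$; thus $\phi h_1 \phi^{-1} = h_2^j$ for some $j$ coprime to $k$.

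It remains to force $j = 1$. The lift $\phi$ is only determined modulo the deck group $\langle h_1\rangle$, and can be further adjusted by any self-homeomorphism of $S$ isotopic to the identity; using this freedom I arrange that $\phi$ is itself isotopic to the identity. Then $\phi h_1 \phi^{-1}$ is isotopic to $h_1$ and so represents $f$ in $\Mod(S)$, while $h_2^j$ represents $f^j$; since $f$ has order exactly $k$, this forces $j \equiv 1 \pmod k$. The delicate step is the orbifold-matching: the cleanest packaging goes through Nielsen realization (Kerckhoff), equipping each $h_i$ with an invariant hyperbolic structure and reducing the problem to the uniqueness of the hyperbolic structure on the quotient orbifold determined by $f$; this is essentially the approach reported in Markovic \cite[Proposition 1.1]{Mar}. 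In the disc-with-marked-points setting relevant to the braid group, one can alternatively invoke Ker\'ekj\'art\'o's classical theorem that every finite-order orientation-preserving homeomorphism of $D^2$ is conjugate to a rotation, which bypasses the orbifold formalism entirely.
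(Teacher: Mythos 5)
The paper does not actually prove this proposition --- it is stated and attributed to Markovic \cite[Prop.~1.1]{Mar} in general, with the genus-$0$ case (all that is used later) credited to Brouwer, Eilenberg, Ker\'ekj\'art\'o, and Constantin--Kolev. So the comparison here is only with those cited sources, not with an argument in the paper itself.

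Your sketch follows the standard orbifold/Nielsen route, and your closing remark that Ker\'ekj\'art\'o's theorem already handles the disc and sphere cases is the right emphasis for this paper. However, the final paragraph has a genuine gap. Either you choose $\bar\phi$ so that $\rho_2\circ\bar\phi_*=\rho_1$ literally as surjections to $\mathbb{Z}/k$ (not merely with the same kernel), in which case a lift $\phi$ automatically intertwines the distinguished generators and gives $\phi h_1\phi^{-1}=h_2$ with $j=1$ from the start, making the last paragraph superfluous; or you only match kernels, in which case $j$ may be $\neq 1$, and the step ``adjust $\phi$ to be isotopic to the identity'' does not do what you want. The freedom you cite --- postcomposing the lift by deck transformations in $\langle h_1\rangle$ and by homeomorphisms isotopic to the identity --- changes the mapping class $[\phi]$ only by powers of $f$ (the former) or not at all (the latter), so there is no reason an arbitrary $[\phi]$ can be normalized to be trivial. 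The real content you need is in the orbifold-matching step: you must argue that $h_1\simeq h_2$ forces not just homeomorphic quotient orbifolds with the same cone data but identical classifying surjections $\rho_i$ after a suitable choice of $\bar\phi$, e.g.\ via the fact that isotopic actions determine the same extension $1\to\pi_1(S)\to\pi_1^{\mathrm{orb}}(O)\to\mathbb{Z}/k\to 1$, or via Nielsen's local rotation data being an isotopy invariant. As written, the invocation of ``Nielsen's classical analysis'' is doing exactly the work that needs to be checked, and the $j=1$ cleanup at the end is not a valid substitute.
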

In the following, we only need the genus $0$ case of the Ahlfors' trick, which goes back to Brouwer \cite{Br}, Eilenberg \cite{Ei} and Ker{\'e}kj{\'a}rt{\'o} \cite{Ker2}; see also Constantin--Kolev \cite{CK}. For $pr_{0;6}$ and $pr_{0;6,1}$, we have the following argument.

\begin{thm}\label{local}
The projections 
\[
pr_{0;6,1}: \Homeo_+(S_{0;6,1})\to \Mod(S_{0;6,1})\text{ and }pr_{0;6}: \Homeo_+(S_{0;6})\to \Mod(S_{0;6})\] do not have sections.
\end{thm}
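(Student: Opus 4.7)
The plan is to assume that a section $s$ of $pr_{0;6,1}$ exists (the case $pr_{0;6}$ being reduced to it by fixing a marked point) and derive a contradiction by applying the Ahlfors trick to two well-chosen torsion elements of $\Mod(S_{0;6,1})$. The key general principle I will exploit is that any orientation-preserving homeomorphism of $\mathbb{R}^2\cong S^2\setminus\{\infty\}$ fixing the origin preserves the rotation number at that fixed point; in particular, a rotation of order $n\ge 3$ cannot be conjugated to its inverse by an element of $\Homeo_+(\mathbb{R}^2)$ fixing the origin.

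First I would identify two torsion elements. Since $\Mod(S_{0;6,1})$ is a quotient of $B_6$ by its center, it carries torsion coming from rotations of $S^2$ fixing the distinguished point $\infty$. I take $\alpha$ of order $6$ (the image of $\sigma_1\sigma_2\sigma_3\sigma_4\sigma_5$ in $B_6/Z$, realized as a rotation cycling the six marked points around an axis through $\infty$) and $\beta$ of order $5$ (realized as a rotation fixing $\infty$ together with one of the six marked points, cyclically permuting the remaining five). Then I apply Ahlfors' trick: by Kerékjártó's theorem each of $s(\alpha)$ and $s(\beta)$ is conjugate in $\Homeo_+(S^2)$ to a rigid rotation, hence has exactly two fixed points on $S^2$. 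Each fixes the distinguished point $\infty$; I label the other fixed points $p_\alpha$ (necessarily unmarked, because $\alpha$ acts as a $6$-cycle) and $p_\beta$ (which is the unique marked point in $\{1,\dots,6\}$ fixed by $\beta$).

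The heart of the argument is to extract, from an algebraic relation between $\alpha$ and $\beta$ in $\Mod(S_{0;6,1})$, the existence of a \emph{global fixed point}: namely, a point $p\in S^2\setminus(\{\infty\}\cup\{\text{marked points}\})$ fixed by every element of the subgroup $\langle\alpha,\beta\rangle$. The mechanism is the rotation-number rigidity mentioned above: whenever a relation forces $s(\gamma)$ (for some $\gamma$ in the group generated by $\alpha,\beta$) to conjugate the rotation $s(\alpha)$ to a non-trivial power of itself that is not a power of $s(\alpha)$ itself in a compatible way, the conjugator $s(\gamma)$ must either fix both poles $\{\infty,p_\alpha\}$ of $s(\alpha)$ or swap them; swapping is ruled out because $\infty$ is fixed by every element of $\Homeo_+(S_{0;6,1})$, so $s(\gamma)$ must fix $p_\alpha$. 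Iterating this across the two torsion elements pins down $p_\alpha=p_\beta=:p$, and since $\langle\alpha,\beta\rangle$ surjects onto a transitive subgroup of $S_6$, the forced common fixed point $p$ is then moved off itself by some element of $\Mod(S_{0;6,1})$, yielding the contradiction. For $pr_{0;6}$, the same argument runs after replacing the distinguished point $\infty$ by a marked point that is fixed by both chosen torsion elements: the point-stabilizer of a marked point in $\Mod(S_{0;6})$ is naturally identified with $\Mod(S_{0;5,1})$, to which the above analysis applies.

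The main obstacle I expect is the precise identification in Step 3 of the relation between $\alpha$ and $\beta$ in $\Mod(S_{0;6,1})\cong B_6/Z(B_6)$ that forces the coincidence of their unmarked fixed points and hence the global fixed point. Establishing this relation is a purely algebraic computation in the braid group (the ``group structure'' emphasized in the introduction), after which the rest of the proof is a formal application of Ahlfors' trick (Kerékjártó) together with the rotation-number rigidity for orientation-preserving homeomorphisms of the punctured sphere. This is expected to be short and self-contained, in line with the paper's description of a ``local argument.''
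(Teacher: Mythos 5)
Your proposal identifies the right two torsion elements (the order-$6$ rotation fixing only the distinguished point, and the order-$5$ rotation fixing the distinguished point plus one marked point) and correctly invokes Ahlfors'/Ker\'{e}kj\'{a}rt\'{o}'s theorem to conclude that the section carries them to genuine rotations with exactly two fixed points. The high-level goal---show the unmarked fixed point $p_\alpha$ of $s(\alpha)$ is a global fixed point, and observe that $s(\beta)$ cannot have a third fixed point---also matches the paper. However, there is a genuine gap at the heart of your ``Step 3.'' You say ``The main obstacle I expect is the precise identification of the relation between $\alpha$ and $\beta$ \dots that forces the coincidence of their unmarked fixed points,'' and this is precisely where you have nothing concrete. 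The mechanism the paper uses is more elementary than your ``rotation-number rigidity'': if $g$ commutes with $\alpha^k$ for some $0<k<6$, then $s(g)$ preserves $\mathrm{Fix}(s(\alpha^k))=\{p_0,p_\alpha\}$ and, since $s(g)$ already fixes $p_0$, it must fix $p_\alpha$. The real content is then a combinatorial lemma: the centralizers $C(2)$ and $C(3)$ of $\alpha^2$ and $\alpha^3$ together generate all of $\Mod(S_{0;6,1})$. The paper proves this by an explicit manipulation expressing a half-twist $\sigma_3$ as a word in $\alpha$, $\sigma_1\sigma_4$, $\sigma_2\sigma_5$, $\sigma_3\sigma_6$, $\sigma_1\sigma_3\sigma_5$, all of which visibly lie in $C(2)\cup C(3)$. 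Without some such generation statement you cannot promote ``$s(\alpha)$ and $s(\beta)$ commute with certain elements'' to ``$p_\alpha$ is a global fixed point,'' and no amount of rotation-number bookkeeping by itself supplies this.

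Your treatment of $pr_{0;6}$ is also not correct as stated: the stabilizer of a marked point in $\Mod(S_{0;6})$ is $\Mod(S_{0;5,1})$, not $\Mod(S_{0;6,1})$, so your ``above analysis applies'' does not literally apply (the order-$6$ torsion element does not live there). The paper handles $pr_{0;6}$ directly: now $s(\alpha_1)$ has a two-point fixed set $\{A,B\}$ with no distinguished point to anchor it, so elements of $C(k)$ only \emph{preserve} $\{A,B\}$ rather than fix each point; the contradiction with $s(\alpha_2)$ then uses the parity observation that an order-$5$ (odd) homeomorphism preserving a two-element set must fix each point, giving $s(\alpha_2)$ three fixed points. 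That parity step is absent from your sketch and is what makes the second case go through.
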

\begin{proof}
The above non-existence follows from the incompatibility of finite order elements in $\Mod(S_{0;6,1})$ and $\Mod(S_{0;6}^1)$. We prove the $pr_{0;6,1}$ case first. We name the marked points $p_0,p_1,...,p_6$ for both $\Homeo_+(S_{0;6,1})$ and $\Mod(S_{0;6,1})$ where $p_0$ is the point that is fixed globally. We consider the following two torsion elements in $\Mod(S_{0;6,1})$: 
\begin{itemize}
\item $\alpha_1$: the rotation of order $6$ fixing $p_0$ and no other marked points, 
\item $\alpha_2$: the rotation of order $5$ fixing $p_0$ and $p_6$. 
\end{itemize}
Now we assume that there exists a section $s$ of $pr_{0;6,1}: \Homeo_+(S_{0;1,6})\to \Mod(S_{0;6,1})$. 

By the Ahlfors' trick, finite order element of $\Homeo_+(S_0)$ is conjugate to an actual rotation. Then $s(\alpha_1)$ has another fixed point other than $p_0$, we call this point $A$. We know that $A$ is not a marked point because $\alpha_1$ fixes no other marked points. The goal of the proof is to show that $A$ is a global fixed point for $\Mod(S_{0;6,1})$, which contradicts the fact that $s(\alpha_2)$ only fixes $p_0,p_6$ but not $A$. This follows from the Ahlfors' trick on $s(\alpha_2)$.

For $0<k<6$, since $s(\alpha_1^k)$ is a nontrivial rotation, we know that $\fix(s(\alpha_1^k))=\{p_0,A\}$. If $g\in \Mod(S_{0;6,1})$ commutes with $\alpha_1^k$, then 
\[
s(g)(\{p_0,A\})=s(g)(\fix(s(\alpha_1^k)))=\fix(s(g\alpha_1^kg^{-1}))=\fix(s(\alpha_1^k))=\{p_0,A\}.
\]  
Since we also know that $s(g)$ fixes $p_0$, we obtain that $s(g)$ fixes $A$.  Denote by $C(k)$ the centralizer of $\alpha_1^k$ in $\Mod(S_{0;6,1})$. The above discussion establishes the fact that $s(C(k))$ fixes $A$. Denote by $G<\Mod(S_{0;6,1})$ the subgroup generated by $C(2)$ and $C(3)$. To finish our proof, all we need now is to show that $G=\Mod(S_{0;6,1})$. 

Let $\sigma_1$ be the half twist in $\Mod(S_{0;6,1})$ and $\alpha_1$ be the rotation as in the following figure. 

\begin{figure}[H]
\centering
\begin{tikzpicture}
\draw[\discColor, thick] (0,0) circle [radius = 1*\discSizeX];

\node at (0:0.9*\discSizeX) {$p_1$};
\node (1) at (0:0.67*\discSizeX) {};
\fill[black, thick] (0:0.67*\discSizeX) circle [radius = 0.05*\discSizeX];
\node at (60:0.9*\discSizeX) {$p_2$};
\node (2) at (60:0.67*\discSizeX){};
\fill[black, thick] (60:0.67*\discSizeX) circle [radius = 0.05*\discSizeX];
\draw [->, thick] (1) to [out=30,in=0] (2);
\draw [->, thick] (2) to [out=225,in=145] (1);
\node at (30:0.65*\discSizeX) {$\sigma_1$};

\draw [->, thick] (0:1.1*\discSizeX) to [out=90,in=330] (30:1.1*\discSizeX);
\node at (25:1.3*\discSizeX) {$\alpha_1$};
\node at (10:1.25*\discSizeX) {$\frac{2\pi}{6}$};

\node at (120:0.9*\discSizeX) {$p_3$};
\fill[black, thick] (120:0.67*\discSizeX) circle [radius = 0.05*\discSizeX];
\node at (180:0.9*\discSizeX) {$p_4$};
\fill[black, thick] (180:0.67*\discSizeX) circle [radius = 0.05*\discSizeX];
\node at (240:0.9*\discSizeX) {$p_5$};
\fill[black, thick] (240:0.67*\discSizeX) circle [radius = 0.05*\discSizeX];
\node at (300:0.9*\discSizeX) {$p_6$};
\fill[black, thick] (300:0.67*\discSizeX) circle [radius = 0.05*\discSizeX];
\end{tikzpicture}
\caption{The mapping class $\sigma_1$ and $\alpha_1$ in $\Mod(S_{0;6,1})$}
\end{figure}
Define $\sigma_i:=\alpha_1^{-i}\sigma_1\alpha_1^i$, which is also a half twist. First of all, $\Mod(S_{0;6,1})$ is generated by $\sigma_1,...,\sigma_5$. This can be seen from the fact that the braid group $\Mod(S_{0;6}^1)$ is already generated by $\sigma_1,...,\sigma_5$ (see, e.g., \cite[Page 246]{FM}) and that $\Mod(S_{0;6,1})$ is the quotient of $\Mod(S_{0;6}^1)$ by the Dehn twist about the boundary component. Therefore, we know that $\sigma_i$ and $\alpha_1$ generate $\Mod(S_{0;6,1})$. 

Since $\alpha_1\in C(2)$, all we need to prove is that $\sigma_3\in G$. We prove this by explicitly writing $\sigma_3$ as a product of elements in $C(2)$ and $C(3)$. By observation, $\sigma_1\sigma_4,\sigma_2\sigma_5, \sigma_3\sigma_6 \in C(3)$ and $\sigma_1\sigma_3\sigma_5,\sigma_2\sigma_4\sigma_6\in C(2)$. We now start with 
\[\alpha_1=\sigma_1\sigma_2\sigma_3\sigma_4\sigma_5 \in G.\]
Since $\sigma_5\sigma_2\in G$, we have that 
\[
\sigma_1\sigma_2\sigma_3\sigma_4\sigma_5(\sigma_5\sigma_2)^{-1} \in G.\]
By commutativity of $\sigma_2$ and $\sigma_4$, we obtain
\[
\sigma_1\sigma_2\sigma_3\sigma_2^{-1}\sigma_4 \in G.\]
Applying the same calculation for $\sigma_1\sigma_4\in G$, we obtain
\[
\sigma_1\sigma_2\sigma_3\sigma_2^{-1}\sigma_1^{-1}\in G.
\]
Since $\sigma_1\sigma_3\sigma_5\in G$, we obtain
\[
(\sigma_1\sigma_3\sigma_5)^{-1}\sigma_1\sigma_2\sigma_3\sigma_2^{-1}\sigma_1^{-1}(\sigma_1\sigma_3\sigma_5)\in G.
\]
But we know that $\sigma_5$ commutes with every other element in the above equation, so we obtain
\[
\sigma_3^{-1}\sigma_2\sigma_3\sigma_2^{-1}\sigma_3\in G.\]
Since $\sigma_3\sigma_6\in G$, we obtain
\[
(\sigma_3\sigma_6)\sigma_3^{-1}\sigma_2\sigma_3\sigma_2^{-1}\sigma_3(\sigma_3\sigma_6)^{-1}\in G.\]
But we know that $\sigma_6$ commutes with every other element in the above equation, so we obtain
\[\sigma_2\sigma_3\sigma_2^{-1}\in G.\]
Since $\sigma_2\sigma_5\in G$, we obtain
\[(\sigma_2\sigma_5)^{-1}\sigma_2\sigma_3\sigma_2^{-1}(\sigma_2\sigma_5)\in G.\]
But we know that $\sigma_5$ commutes with every other element in the above equation, so we obtain
\[\sigma_3\in G.\] 
This concludes the proof for case $pr_{0;6}^1$

For case $pr_{0;6}$, we assume that $pr_{0;6}$ has a section $s$.
We name the marked points $p_1,...,p_6$ for both $\Homeo_+(S_{0;6})$ and $\Mod(S_{0;6})$. We consider the following two torsion elements in $\Mod(S_{0;6})$: 
\begin{itemize}
\item $\alpha_1$: the rotation of order $6$ fixing no marked points, 
\item $\alpha_2$: the rotation of order $5$ fixing $p_6$. 
\end{itemize}
By the Ahlfors' trick, finite order element of $\Homeo_+(S_0)$ is conjugate to an actual rotation. Then $s(\alpha_1)$ has two fixed points $A,B$. The goal of the proof is to show that the set $\{A,B\}$ is globally preserved by $s(\Mod(S_{0;6}))$, which contradicts the fact that $s(\alpha_2)$ cannot fix the set $\{A,B\}$. If $s(\alpha_2)$ fixes the set $\{A,B\}$ then since the order of $s(\alpha_2)$ is odd, $s(\alpha_2)$ fixes $A,B$ point-wise. Therefore $s(\alpha_2)$ fixes $p_6,A,B$ which contradicts to the Ahlfors' trick that $s(\alpha_2)$ is an actual rotation.

For $0<k<6$, since $s(\alpha_1^k)$ is a nontrivial rotation, we know that $\Fix(s(\alpha_1^k))=\{A,B\}$. If $g\in \Mod(S_{0;6})$ commutes with $\alpha_1^k$, then 
\[
s(g)(\{A,B\})=s(g)(\fix(s(\alpha_1^k)))=\fix(s(g\alpha_1^kg^{-1}))=\fix(s(\alpha_1^k))=\{A,B\}.
\]  
We denote by $C(k)$ the centralizer of $\alpha_1^k$ in $\Mod(S_{0;6})$. The above discussion establishes the fact that $s(C(k))$ preserves the set $\{A,B\}$. We denote by $G<\Mod(S_{0;6})$, the subgroup generated by $C(2)$ and $C(3)$. To finish our proof, all we need now is to show that $G=\Mod(S_{0;6})$ which follows the same computation as in case $pr_{0;6,1}$.
\end{proof}
\begin{rem}
Notice that the above argument does not give any information for the case of 
\[pr_{0;n,1}:\Homeo_+(S_{0;n,1})\to \Mod(S_{0;n,1})\] when $n$ is a prime number. We need a stronger tool to deal with the general case.
\end{rem}

\section{general case}
In this section, we prove Theorem \ref{main}. The local argument shows that the section of $pr_{0;6}$ and $pr_{0;6,1}$ do not exist. For $n\ge 6$, assume that $pr_{0:n}^1$ has a section $\mathcal{E}$. Let $c$ be a simple closed curve in $S_{0;n}^1$ that surrounds $6$ points. Let $em: \Mod(S_{0;6}^1)\to \Mod(S_{0;6,n-6}^1)$ be the embedding of the subgroup that consists of mapping classes that are the identity map outside of $c$. Then we have the following compositions of maps
\[
\rho: \Mod(S_{0;6}^1)\xrightarrow{\text{em}} \Mod(S_{0;6,n-6}^1)\xrightarrow{\mathcal{E}} \Homeo_+(S_{0;6,n-6}^1)\xrightarrow{\text{forget}}\Homeo_+(S_{0;6}^1)\xrightarrow{\text{pinch}} \Homeo_+(S_{0;6,1}),
\]
where ``forget" denotes the forgetful map forgetting the extra $n-6$ marked points and ``pinch" denotes the action on the the quotient space $S_{0;6}^1/\sim$ that identifies the boundary component. 

By definition, the homomorphism $\rho$ is almost a realization of $pr_{0:6,1}$ except that the center element of $\Mod(S_{0;6}^1)$  (the Dehn twist $T_b$ about the boundary component $b$) does not map to the identity homeomorphism. We solve this problem by the minimal decomposition theory established by Markovic \cite{Mar}. The key idea is that the center element is canonically semi-conjugate to identity. 

\subsection{Minimal decomposition}
In this section, we recall a theory called minimal decomposition of surface homeomorphisms. This is established in the celebrated paper of Markovic \cite{Mar} giving the first proof that the mapping class group cannot be geometrically realized as homeomorphisms. We apply Markovic's theory to modify the homomorphism $\rho$ to an actual section of $pr_{0;6,1}$.

We recall the definition of upper semi-continuous decomposition of a surface; see also Markovic \cite[Definition 2.1]{Mar}. Let $M$ be a surface. 
\begin{defn}[Upper semi-continuous decomposition]
Let $\mathbf{S}$ be a collection of closed, connected subsets of $M$. We say that $\mathbf{S}$ is an upper semi-continuous decomposition of $M$ if the following holds:
\begin{itemize}
\item If $S_1,S_2\in \mathbf{S}$, then $S_1\cap S_2=\emptyset$.
\item If $S\in \mathbf{S}$, then $S$ does not separate $M$; i.e., $M-S$ is connected.
\item We have $M=\cup_{S\in \mathbf{S}} S$.
\item If $S_n\in \mathbf{S}, n\in \mathbb{N}$ is a sequence that has the Hausdorff limit $S_0$ then there exists $S\in\mathbf{S}$ such that $S_0\subset S$.
\end{itemize}
\end{defn}
Now we define acyclic sets on a surface.
\begin{defn}[Acyclic sets]
Let $S\subset M$ be a closed, connected subset of $M$ which does not separate $M$. We say that $S$ is acyclic if there is a simply connected open set $U\subset M$ such that $S \subset U$ and $U-S$ is homeomorphic to an annulus.
\end{defn}
The easiest examples of an acyclic set are a point, an embedded closed arc or an embedded closed disk in $M$. Let $S\subset M$ be a closed, connected set that does not separate M. Then $S$ is acyclic if and only if there is a lift of $S$ to the universal cover $\widetilde{M}$ of $M$ which is a compact subset of $\widetilde{M}$. The following theorem is a classical result called the Moore's theorem; see, e.g., \cite[Theorem 2.1]{Mar}. The Moore's theorem is used to modify $\rho$.
\begin{thm}[Moore's theorem]\label{moore}
Let $M$ be a surface and $\mathbf{S}$ be an upper semi-continuous decomposition of $M$ so that every element of $\mathbf{S}$ is acyclic. Then there is a continuous map $\phi:M\to M$ that is homotopic to the identity map on $M$ and such that for every $p\in M$, we have that $\phi^{-1}(p)\in \mathbf{S}$. Moreover we have that $\mathbf{S}=\{\phi^{-1}(p)|p\in M\}$.
\end{thm}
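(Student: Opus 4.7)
The plan is to form the quotient $X := M/\mathbf{S}$ with its natural quotient map $\pi: M \to X$, identify $X$ with $M$ via a homeomorphism $\psi: X \to M$, and then argue that $\phi := \psi \circ \pi$ is homotopic to the identity. Conditions 1 and 3 of upper semi-continuity make $\pi$ a well-defined surjection, and condition 4 translates exactly to $\pi$ being a closed map: if $C \subset M$ is closed and $\pi(x_n) \to y \in X$ with $x_n \in C$, then extracting a subsequential Hausdorff limit of the decomposition elements through the $x_n$ and invoking condition 4 places $y$ in $\pi(C)$. Hausdorffness of $X$ follows, and second-countability is inherited from $M$.

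Next I would show $X$ is a topological surface homeomorphic to $M$. For each $S \in \mathbf{S}$, acyclicity provides a simply connected open $U \supset S$ with $U \setminus S$ homeomorphic to an annulus. Using condition 4 I shrink $U$ so that every element of $\mathbf{S}$ meeting $U$ is contained in $U$; then inside the disk $U$ the restricted decomposition is still upper semi-continuous with acyclic, non-separating elements, and a Moore-type collapse identifies the quotient $\pi(U)$ with a disk sending $\pi(S)$ to a central point. Patching these local models endows $X$ with the structure of a 2-manifold, and since each fiber of $\pi$ is cell-like (acyclic and non-separating) the map $\pi$ induces isomorphisms on homotopy groups, so $X$ is homeomorphic to $M$. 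Fix such a homeomorphism $\psi: X \to M$ in the isotopy class of the chart identifications, so $\phi := \psi \circ \pi$ has fibers exactly the elements of $\mathbf{S}$.

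To build a homotopy from $\phi$ to $\mathrm{id}_M$, I would work chart by chart. In each disk $U$ as above, $\phi|_U$ is a radial-type collapse of $S$ onto a point and is joined to $\mathrm{id}_U$ by the straight-line homotopy inside $U$, fixing a neighborhood of $\partial U$. Choosing a locally finite cover of $M$ by such chart-disks and a subordinate partition of unity, one patches the local homotopies into a global $H: M \times [0,1] \to M$ with $H_0 = \mathrm{id}_M$ and $H_1 = \phi$.

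The main obstacle is the chart construction in the second step: near a single $S$ there can be uncountably many other decomposition elements accumulating onto $S$, whose geometry is only controlled by the Hausdorff-limit condition, and one must verify that quotienting $U$ by the whole family still produces a disk (rather than, say, a dendritic Peano continuum). This is exactly the content of Moore's original argument, which handles the accumulation by a prime-ends analysis of the boundary behavior of $\pi$ combined with an inductive refinement of the neighborhood $U$. Because this is a classical and delicate fact, the paper invokes it as a black box rather than reproving it; in the remainder of our work only the statement of Theorem \ref{moore} is used.
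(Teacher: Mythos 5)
The paper does not prove Moore's theorem at all; it states it and cites Markovic \cite[Theorem 2.1]{Mar}, who in turn invokes it as a classical fact going back to Moore (1925). So there is no ``paper's own proof'' to compare with: the correct reading, which you yourself arrive at in your last paragraph, is that the theorem is a black box here and only its statement is used.

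Taken as a sketch of a proof of Moore's theorem in its own right, your outline has the right skeleton (closed quotient map, local disk charts via acyclicity, recognition of $X$ as a surface homeomorphic to $M$), and you correctly flag the genuinely hard step --- showing that the local quotient of a disk by the induced decomposition is again a disk rather than a more pathological Peano continuum. But there is a second, unacknowledged gap: the partition-of-unity patching of the local homotopies does not work. Homotopies of maps into a surface cannot be convexly combined; the ``straight-line'' homotopies live in different coordinate charts, the decomposition elements may be large (a long embedded arc, say), and there is no linear structure on the target to interpolate with. The standard route instead shows that $\pi:M\to X$ is a cell-like map between surfaces, hence a \emph{near-homeomorphism} (a uniform limit of homeomorphisms); one then takes $\psi$ to be the inverse of an approximating homeomorphism, so that $\phi=\psi\circ\pi$ is uniformly close to $\mathrm{id}_M$ and therefore homotopic to it. This also resolves a related imprecision in your step two: an arbitrary homeomorphism $\psi:X\to M$ would only give $\phi$ homotopic to the identity up to a mapping class, so $\psi$ must be chosen compatibly, which the near-homeomorphism argument does automatically.
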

We now recall the minimal decomposition theory. The following definition is \cite[Definition 3.1]{Mar}
\begin{defn}[Admissible decomposition]
Let $\mathbf{S}$ be a upper semi-continuous decomposition of $M$. Let $G$ be a subgroup of $\Homeo(M)$. We say that $\mathbf{S}$ is admissible for the group $G$ if the following holds:
\begin{itemize}
\item Each $f\in G$ preserves set-wise every element of $\mathbf{S}$.
\item Let $S\in \mathbf{S}$. Then every point, in every frontier component of the surface $M-S$ is a limit of points from $M-S$ that belongs to acyclic elements of $\mathbf{S}$.
\end{itemize}
If $G$ is a cyclic group generated by a homeomorphism $f:M\to M$ we say that $S$ is an admissible decomposition of $f$.
\end{defn}
An admissible decomposition for $G<\Homeo(M)$ is called minimal if it is contained in every admissible decomposition for $G$. We have the following theorem from Markovic \cite[Theorem 3.1]{Mar}.
\begin{thm}[Existence of minimal decomposition]
Every group $G<\Homeo(M)$ has a unique minimal decomposition.
\end{thm}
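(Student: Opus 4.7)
The plan is to prove uniqueness as a one-line observation and then establish existence by constructing the minimal decomposition as the common refinement of all admissible decompositions of $G$.

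\textbf{Uniqueness.} If $\mathbf{S}$ and $\mathbf{S}'$ are both minimal admissible decompositions of $G$, then by the defining property each is contained in (i.e.\ refined by) the other, so $\mathbf{S}=\mathbf{S}'$. This part requires no work.

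\textbf{Existence.} Let $\mathcal{A}$ denote the family of all admissible decompositions for $G$; this is nonempty since at a minimum the trivial decomposition is admissible. For every $x\in M$ and every $\mathbf{S}\in\mathcal{A}$, write $S_\mathbf{S}(x)$ for the unique element of $\mathbf{S}$ containing $x$, and set
\[
[x]:=\bigcap_{\mathbf{S}\in\mathcal{A}} S_\mathbf{S}(x).
\]
Each $[x]$ is closed, contains $x$, and is $G$-invariant (since each $S_\mathbf{S}(x)$ is). Define $\mathbf{S}^\ast$ to be the collection of connected components of the sets $[x]$ as $x$ ranges over $M$. By construction each element of $\mathbf{S}^\ast$ is contained in an element of every $\mathbf{S}\in\mathcal{A}$, so once admissibility of $\mathbf{S}^\ast$ is established, $\mathbf{S}^\ast$ will automatically be minimal.

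\textbf{Verification.} The bulk of the argument lies in checking the four properties of an upper semi-continuous decomposition and the two properties of admissibility for $\mathbf{S}^\ast$. Disjointness, covering, connectedness, and closedness are built into the construction. Non-separation of an element $S^\ast\subset[x]$ follows from the fact that $S^\ast$ is contained in each non-separating $S_\mathbf{S}(x)$, together with a Hausdorff-limit argument to pass from the approximating family to the intersection. Upper semi-continuity is checked by taking, for each Hausdorff-convergent sequence $S_n^\ast\to S_0$, and for each $\mathbf{S}\in\mathcal{A}$, the containing sequence $S_\mathbf{S}(x_n)$; passing to a subsequence and using upper semi-continuity of $\mathbf{S}$ gives a limit $S_0^\mathbf{S}\in\mathbf{S}$ containing $S_0$, and then $S_0\subset\bigcap_{\mathbf{S}} S_0^\mathbf{S}$ so $S_0$ is contained in an element of $\mathbf{S}^\ast$. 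Setwise $G$-invariance is immediate.

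\textbf{Main obstacle.} The hardest step is the frontier approximation property of admissibility: every point on a frontier component of $M-S^\ast$ must be a limit of points in $M-S^\ast$ lying in acyclic elements of $\mathbf{S}^\ast$. Acyclicity does not behave well under infinite intersection, so a direct approach fails. The plan is to use the characterization that a closed, connected, non-separating subset of $M$ is acyclic if and only if it has a compact lift to $\widetilde{M}$; then if $S'\subset S$ with $S'$ closed, connected, and non-separating and $S$ acyclic, the restriction of a compact lift of $S$ to $p^{-1}(S')$ is again compact, so $S'$ remains acyclic. Combined with the admissibility of each $\mathbf{S}\in\mathcal{A}$ (which already provides an acyclic approximation to any frontier point in $M-S_\mathbf{S}(x)$), one can transfer the acyclic approximation through the common refinement by replacing each approximating acyclic element of $\mathbf{S}$ by the smaller acyclic element of $\mathbf{S}^\ast$ that it contains. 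This inheritance argument is where all the subtlety concentrates, and it is the step I would spend the most care on.
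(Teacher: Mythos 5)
This theorem is not proved in the paper at all: it is quoted verbatim from Markovic \cite[Theorem 3.1]{Mar}, and the author uses it as a black box. So there is no ``paper's own proof'' to compare your attempt against; what follows is an assessment of the attempt on its own terms.

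Your uniqueness argument is correct, and the high-level plan for existence --- realize the minimal decomposition as the common refinement $\mathbf{S}^\ast$ of all admissible decompositions, i.e.\ connected components of $[x]=\bigcap_{\mathbf{S}}S_{\mathbf{S}}(x)$ --- is the natural one and is in the spirit of Markovic's construction. But the verification is where the entire content of the theorem lives, and two of the steps you gesture at have genuine gaps that your outline does not close.

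First, non-separation. A closed connected subset of a non-separating closed connected set can perfectly well separate the surface: on $S^2$, take two closed disks $S_1,S_2$ whose union is $S^2$ and whose intersection is a closed equatorial annulus; each $S_i$ is non-separating but $S_1\cap S_2$ separates. So ``$S^\ast$ is contained in each non-separating $S_\mathbf{S}(x)$'' gives you nothing by itself, and the appeal to ``a Hausdorff-limit argument'' is not an argument. Any correct proof must use the admissibility hypotheses (in particular the acyclic-approximation condition on frontiers) to rule out this behavior, and you would need to say how.

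Second, the frontier approximation property. Your proposed mechanism --- a closed connected non-separating subset of an acyclic set is acyclic, then ``transfer'' the approximation --- is sound as far as it goes, but it does not address the actual difficulty. If $S^\ast$ is strictly smaller than every $S_{\mathbf{S}}(x)$ containing it, the frontier of $M-S^\ast$ can sit in the \emph{interior} of each $S_{\mathbf{S}}(x)$, where the admissibility of $\mathbf{S}$ tells you nothing. The hypothesis in the definition only controls points on frontier components of $M-S_{\mathbf{S}}(x)$, and there is no a priori relation between those frontiers and the frontier of $M-S^\ast$. You acknowledge this is the hard part, but the inheritance argument you sketch does not reach it. This, together with the non-separation issue, is exactly where Markovic's proof does real work, and as written the proposal is an outline with the two load-bearing steps left open.
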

Let $b$ be the boundary component of $S_{0;6}^1$ and $T_b$ be the Dehn twist about $b$. The following theorem is a modified version of Markovic \cite[Lemma 5.1]{Mar} for our case.
\begin{thm}\label{key}
Every element of the minimal decomposition $\mathbf{S}$ of $\rho(T_b)$ is acyclic and marked points belong to different elements of $\mathbf{S}$.
\end{thm}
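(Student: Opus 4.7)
The argument exploits two features of $\rho$. First, $\rho(T_b)$ is isotopic to the identity in $\Homeo_+(S_{0;6,1})$, because the Dehn twist $T_b$ becomes trivial after pinching the boundary of $S_{0;6}^1$ to the marked point $p_0$; furthermore, since $T_b$ fixes each marked point and the section $\mathcal{E}$ realizes the mapping-class action on marked points, $\rho(T_b)$ fixes all seven marked points pointwise. Second, $T_b$ is central in $\Mod(S_{0;6}^1)$, so $\rho(g)\rho(T_b)\rho(g)^{-1}=\rho(T_b)$ for every $g$. By uniqueness of the minimal decomposition of the cyclic group $\langle\rho(T_b)\rangle$, we deduce $\rho(g)(\mathbf{S})=\mathbf{S}$; hence $\rho(\Mod(S_{0;6}^1))$ acts on $\mathbf{S}$ by permutation.

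For the acyclicity assertion I plan to adapt the argument of \cite[Lemma 5.1]{Mar}. Regard $S_{0;6,1}$ as a sphere with seven punctures, so its universal cover is $\mathbb{H}^2$. The canonical lift of $\rho(T_b)$ to $\mathbb{H}^2$ induced by the isotopy to the identity commutes with deck transformations and acts trivially on $\partial_\infty\mathbb{H}^2$. If some $S\in\mathbf{S}$ were non-acyclic, each connected lift $\widetilde{S}\subset\mathbb{H}^2$ would be unbounded and would accumulate on a nondegenerate arc of $\partial_\infty\mathbb{H}^2$; combined with the admissibility axiom that frontier points of $M-S$ are approximated by acyclic elements, this lets one ``peel off'' the non-acyclic pieces from $\mathbf{S}$ to produce a strictly coarser admissible decomposition, contradicting minimality.

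For the marked point separation, suppose for contradiction that two marked points $p_i,p_j$ lie in a common element $S\in\mathbf{S}$. Since $\mathbf{S}$ is $\rho(\Mod(S_{0;6}^1))$-invariant, whenever $\rho(g)$ fixes $p_i$ (or $p_j$) the element $\rho(g)(S)$ shares a marked point with $S$ and hence equals $S$ by disjointness. An orbit argument using half-twists that successively fix one marked point while moving the others then forces $S$ to contain the entire orbit $\{p_1,\dots,p_6\}$. To derive a contradiction, I would apply Moore's theorem (legitimized by the acyclicity part) to collapse $\mathbf{S}$ and push $\rho$ down to a homomorphism $\bar\rho:\Mod(S_{0;6,1})\to\Homeo_+(S_{0;6,1})$; since $\bar\rho(T_b)=\mathrm{id}$, $\bar\rho$ would be a candidate section of $pr_{0;6,1}$. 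A careful inspection with the two torsion elements of orders $6$ and $5$ from the local argument of Theorem \ref{local} then exhibits a configuration of Ahlfors rotation representatives whose fixed-point sets cannot coexist with the six points $p_1,\dots,p_6$ being collapsed to a single point in the quotient.

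The main obstacle is the acyclicity step: Markovic's peeling construction must be transported to the punctured-sphere setting while respecting the seven marked points. In particular the peeling must not itself merge two marked points into a single acyclic element, so the two assertions of the theorem cannot be cleanly decoupled and are most naturally established in tandem.
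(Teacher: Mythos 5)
Your proposal diverges from the paper's proof in the one place where all the difficulty lives, and the workarounds you sketch have genuine gaps.

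\textbf{Acyclicity.} You propose to "adapt the argument of \cite[Lemma 5.1]{Mar}" directly on the punctured sphere, lifting $\rho(T_b)$ to $\mathbb H^2$ and performing some peeling. But Markovic's Lemma 5.1 is not a hyperbolic-geometry estimate: its engine is an explicit Anosov homeomorphism supported on an embedded one-holed torus, together with the global shadowing property, which is what forces elements of the minimal decomposition to be bounded in the universal cover. This machinery requires a positive-genus subsurface and simply does not exist on a seven-times-marked sphere. The paper's solution is to pass to the hyper-elliptic $\mathbb Z/2$ branched covers $p\colon S_{2;2}\to S_{0;6,1}$ and $p'\colon S_2^2\to S_{0;6}^1$, lift $\rho$ to $\rho'\colon\Mod(S_2^2)(\tau')\to\Homeo(S_{2;2})(\tau)$, set $F=\rho'(T_{b^1}T_{b^2})$, and run Markovic's argument for $F$ on the genus-$2$ surface (Theorem \ref{acyclic} and the Dehn-twist argument that follows). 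Your sketch acknowledges the obstacle but does not resolve it; the genus-$2$ cover is the resolution.

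\textbf{Marked points separate.} Your orbit argument correctly shows that if two of $p_1,\dots,p_6$ lie in one element $S$ then all of them do. But the next step is circular: you want to quotient by Moore's theorem and contradict the local torsion argument of Theorem \ref{local}, yet that local argument takes place in $\Homeo_+(S_{0;6,1})$, and the Moore quotient is homeomorphic to $S_{0;6,1}$ \emph{only after} you know the marked points lie in distinct elements. If all six marked points collapse to one, the quotient is a sphere with one or two marked points, on which the rotations of orders $5$ and $6$ coexist perfectly happily, so no contradiction emerges. The paper instead argues intrinsically with the double cover: since $\mathbf S'$ is $\tau$-invariant and its elements are acyclic, the element $S$ through $P_i,P_j$ admits a $\tau$-invariant simply connected neighborhood $U(S)$; the image $p(U(S))$ then contains a simple arc joining two branch points $p_i,p_j$, whose preimage is a homotopically nontrivial loop, contradicting simple connectivity of $U(S)$. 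This uses exactly the branched-cover topology that your approach discards.

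In short, the missing idea is the hyper-elliptic double cover, which the paper uses for \emph{both} halves of the theorem; without it, neither your acyclicity sketch nor your marked-point argument goes through.
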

To make the whole proof easier to follow, we postpone the proof of Theorem \ref{key} to the next section.
\subsection{The proof of Theorem \ref{main}}
Now we use Theorem \ref{key} to prove Theorem \ref{main}.
\begin{proof}
Let $\mathbf{S}$ be the minimal decomposition of $\rho(T_b)$. By Theorem \ref{key} and Theorem \ref{moore} (the Moore's theorem), the space $S_{0;6,1}/{\sim}$ is homeomorphic to $S_{0;6,1}$ where $x\sim y$ if and only if $x,y$ belong to the same element of $\mathbf{S}$. Since the minimal decomposition is canonical, if $f\in \Homeo_+(S_{0;6,1})$ commutes with $\rho(T_b)$, then $f$ permutes elements of $\mathbf{S}$. Therefore $f$ induces a homeomorphism of $S_{0,6,1}/{\sim}$. Since $T_b$ is the center of $\Mod(S_{0;6}^1)$, we obtain a new homomorphism $\rho({\sim}):  \Mod(S_{0;6}^1)\to \Homeo_+(S_{0,6,1}/{\sim})$ where $\rho({\sim})(T_c)=id$ by the definition of admissible decomposition. This is a section of $pr_{0:6,1}$, which contradicts the fact that  $pr_{0:6,1}$ has no section.

We now prove the cases $pr_{0;n}$ and $pr_{0;n,1}$, which is similar to the proof of case $pr_{0;n}^1$. For $n\ge 6$, assume that $pr_{0:n,1}$ or $pr_{0;n}$ has a section $\mathcal{E}$. Similarly we have the following compositions of maps
\[
\rho: \Mod(S_{0;6}^1)\xrightarrow{\text{em}} \Mod(S_{0;6,n-6})\xrightarrow{\mathcal{E}} \Homeo_+(S_{0;6,n-6})\xrightarrow{\text{forget}}\Homeo_+(S_{0;6}).\]
By the same argument as before, we obtain a homomorphism 
\[\rho(\sim): \Mod(S_{0;6,1})\to \Homeo_+(S_{0;6}).\] Even though $\rho(\sim)$ is not a realization that we have discussed, we still use the fixed point argument as the case $pr_{0;6}$ in the proof of Theorem \ref{local} to show that such $\rho(\sim)$ does not exist. We sketch the proof in the following.

Notice that the $6$ marked points in the domain of $\rho(\sim)$ correspond to the marked points in $\Homeo_+(S_{0;6})$. Therefore $\rho(\sim)(\alpha_1)$ fixes no marked points but two other points $A,B\in S_{0;6}$. By the same computation as in the proof of Theorem \ref{local}, we show that the whole group $\rho(\sim)(B_{0;6}^1)$ fixes $\{A,B\}$. However $\rho(\sim)(\alpha_2)$ has order $5$ and fixes $\{A,B\}$, which implies that $\rho(\sim)(\alpha_2)$ fixes $A,B$ point-wise. However $\rho(\sim)(\alpha_2)$ also fixes one marked point. This is a contradiction.
\end{proof}

\subsection{Application to Nielsen realization problem for closed mapping class group}
Now we proceed to apply Theorem \ref{main} to deal with Nielsen realization problem for $\mathcal{H}_g$. The same strategy has also been used in \cite{ST}.
\begin{proof}[Proof of Corollary \ref{nielsen}]
The subgroup $\mathcal{H}_g$ satisfies the following exact sequence
\[
1\to \mathbb{Z}/2\to \mathcal{H}_g\to \Mod(S_{0;2g+2})\to 1.
\]
Assume that $\mathcal{H}_g$ has a realization and that $\widetilde{\tau}\in \Homeo_+(S_g)$ is the realization of $\tau$. By the Ahlfors' trick, $\widetilde{\tau}$ is conjugate to the standard hyper-elliptic involution which means that $\widetilde{\tau}$ has $2g+2$ fixed point. Denote by $\Homeo_+(S_g)(\widetilde{\tau})$ the centralizer of $\widetilde{\tau}$. Thus we have the following exact sequence
\[
1\to \mathbb{Z}/2\to \Homeo_+(S_g)(\widetilde{\tau})\to \Homeo_+(S_{0;2g+2})\to 1.
\]
By Birman--Hilden theory \cite{BH} (see, e.g., \cite[Chapter 9.4]{FM}), we know that $\pi_0(\Homeo_+(S_g)(\widetilde{\tau}))=\mathcal{H}_g$. We have the following pullback diagram
\[
\xymatrix{
\Homeo_+(S_g)(\widetilde{\tau})\ar[r]\ar[d]^{pr(\mathcal{H}_g)} &\Homeo_+(S_{0;2g+2})\ar[d]^{pr_{0;2g+2}}\\
\mathcal{H}_g\ar[r]& \Mod(S_{0;2g+2}).}
\]
However, a section of $pr(\mathcal{H}_g)$ gives a section of $pr_{0;2g+2}$, which contradicts Theorem \ref{main}.
\end{proof}

\section{the proof of theorem \ref{key}}
To make the analysis easier, we take the following hyper-elliptic $\mathbb{Z}/2$ branched covers $p: S_{2;2}\to S_{0;6,1}$ and $p': S_2^2\to S_{0;6}^1$ so that we are working with a surface of genus $2$ with marked points and boundary components. 
\begin{figure}[h]
   \labellist 
  \small\hair 2pt
     \pinlabel $\tau$ at -10 67
 \pinlabel $P_1$ at 44 70
   \pinlabel $P_2$ at 60 50
      \pinlabel $P_3$ at 100 50
        \pinlabel $P_4$ at 120 50
      \pinlabel $P_5$ at 160 50
       \pinlabel $P_6$ at 170 70
        \pinlabel $p$ at 240 80
       \pinlabel $P_0^1$ at 225 130
          \pinlabel $P_0^2$ at 230 10
           \pinlabel $p_0$ at 350 140
             \pinlabel $p_1$ at 270 70
              \pinlabel $p_4$ at 430 70
               \pinlabel $p_2$ at 320 35
                  \pinlabel $p_3$ at 370 35
                   \pinlabel $p_5$ at 320 100
                     \pinlabel $p_6$ at 370 100
   \endlabellist
     \centerline{ \mbox{
 \includegraphics[width = 3.5in]{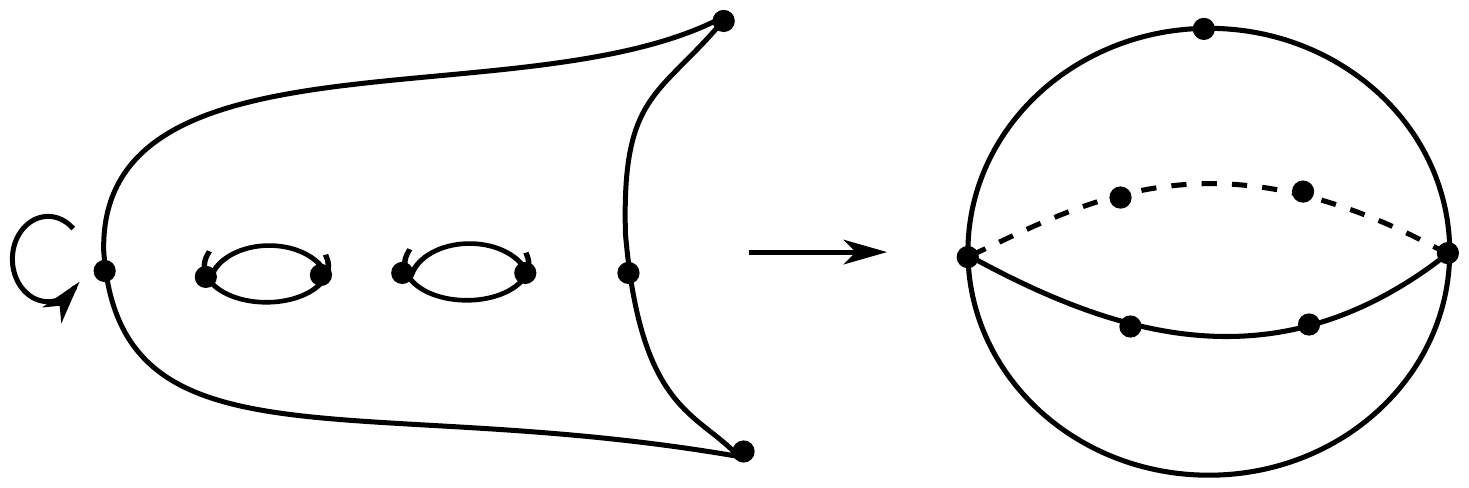}}}
 \caption{The projection $p: S_{2;2}\to S_{0;6,1}$}
 \label{p1}
  \end{figure}
  \begin{figure}[h]
   \labellist 
  \small\hair 2pt
     \pinlabel $\tau'$ at -10 67
 \pinlabel $P_1$ at 44 70
   \pinlabel $P_2$ at 60 50
      \pinlabel $P_3$ at 100 50
        \pinlabel $P_4$ at 120 50
      \pinlabel $P_5$ at 160 50
       \pinlabel $P_6$ at 170 70
        \pinlabel $p$ at 240 80
       \pinlabel $b^1$ at 225 126
          \pinlabel $b^2$ at 225 10
           \pinlabel $b$ at 350 140
             \pinlabel $p_1$ at 295 70
              \pinlabel $p_4$ at 390 70
               \pinlabel $p_2$ at 320 35
                  \pinlabel $p_3$ at 365 35
                   \pinlabel $p_5$ at 315 100
                     \pinlabel $p_6$ at 360 100
   \endlabellist
     \centerline{ \mbox{
 \includegraphics[width = 3.5in]{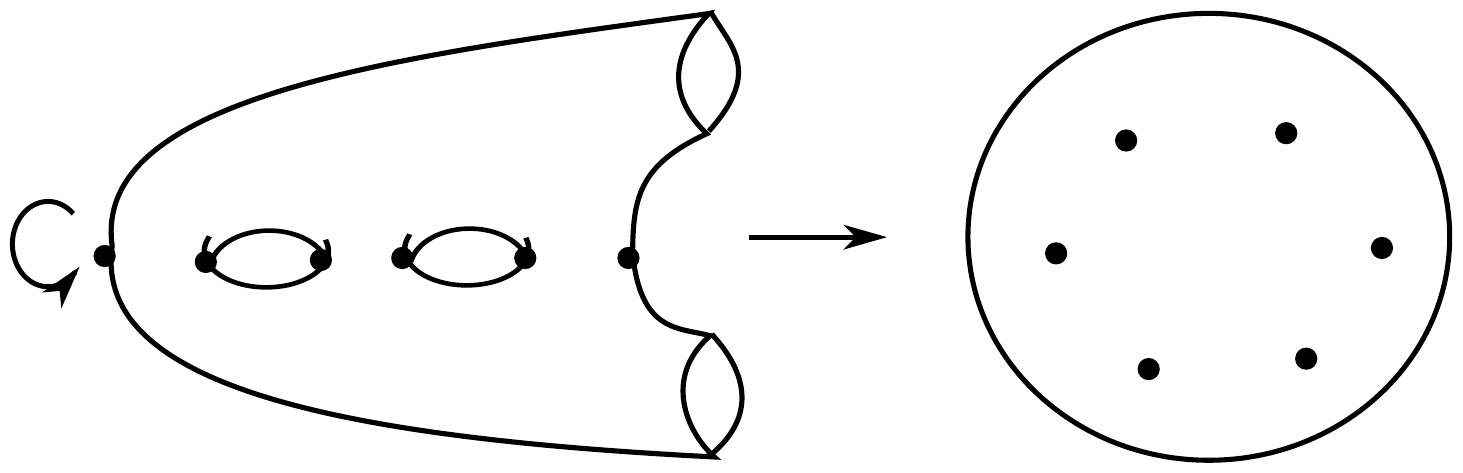}}}
 \caption{The projection $p': S_{2}^2\to S_{0;6}^1$}
 \label{p2}
  \end{figure}
  
Let $\tau$ and $\tau'$ be the corresponding hyper-elliptic involution of $S_{2;2}$ and $S_2^2$. We use the same letter to represent both a homeomorphism and its mapping class. We also use the same letter to represent marked points in $S_2^2$ and $S_{2;2}$ and marked points in $S_{0;6}^1$ and $S_{0;6,1}$ as in Figure \ref{p1} and \ref{p2}. Let $\Homeo(S_{2,2})(\tau)$ and $\Mod(S_{2}^2)(\tau')$ be the centralizer of $\tau$ and $\tau'$. We have the following two short exact sequences
\[
1\to \mathbb{Z}/2\to \Mod(S_{2}^2)(\tau')\to \Mod(S_{0;6}^1)\to 1
\]
and 
\[
1\to \mathbb{Z}/2\to \Homeo(S_{2,2})(\tau)\to \Homeo(S_{0;6,1})\to 1,
\]
The homomorphism $\rho: \Mod(S_{0;6}^1)\to \Homeo(S_{0;6,1})$ induces a homomorphism $\rho' :\Mod(S_{2}^2)(\tau')\to \Homeo(S_{2,2})(\tau)$. Let $b^1,b^2$ be the two boundary components of $S_{2}^2$ and denote $F:=\rho'(T_{b^1}T_{b^2})$, which is a lift of $\rho(T_b)\in \Mod(S_{0;6}^1)$. Let $\mathbf{S}'$ be the minimal decomposition of $F$. Since $F$ commutes with $\tau$, we know that $\mathbf{S}'$ is $\tau$ invariant. Since $F$ is a lift of $\rho(T_b)$, we know that $p(\mathbf{S}')$ is an admissible decomposition of $\rho(T_b)$. To prove that the admissible decomposition of $\rho(T_b)$ satisfies Theorem \ref{key}, we only need to show that $p(\mathbf{S}')$ satisfies Theorem \ref{key}. 

Let $e: S_{1}^1\subset S_{2}^2$ be the following embedding and $c$ be the boundary of the subsurface $e(S_1^1)$.

\begin{figure}[h]
   \labellist 
  \small\hair 2pt
     \pinlabel $c$ at 75 125
     \pinlabel $e(S_1^1)$ at 25 78
      \pinlabel $S_2^2$ at 140 78
   \endlabellist
     \centerline{ \mbox{
 \includegraphics[width = 2in]{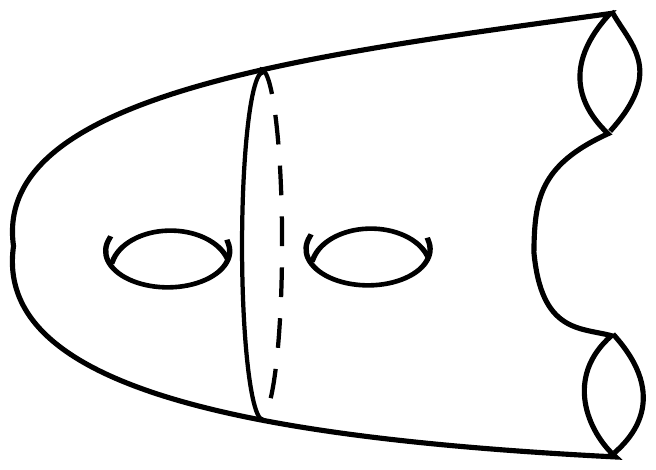}}}
 \caption{The embedding $e:S_1^1\to S_2^2$}
  \end{figure}
  
\begin{lem}
The induced map of $e$ on mapping class groups $E: \Mod(S_{1}^1)\to \Mod(S_{2}^2)$ has image in $\Mod(S_{2}^2)(\tau')$.
\end{lem}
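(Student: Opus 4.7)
The plan is to verify that the image of $E$ lies in the centralizer $\Mod(S_2^2)(\tau')$ by checking commutation with $\tau'$ on a generating set of $\Mod(S_1^1)$. Since $\Mod(S_1^1)$ is generated by two Dehn twists $T_a, T_b$ about simple closed curves $a,b \subset S_1^1$ (say the standard meridian and longitude; see \cite[Chapter 3]{FM}), and $E$ is a homomorphism, it suffices to check that $E(T_a)=T_{e(a)}$ and $E(T_b)=T_{e(b)}$ each commute with $\tau'$ in $\Mod(S_2^2)$.

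The key geometric input, visible in Figure 5, is that the embedding $e$ is chosen so that the subsurface $e(S_1^1)$ is preserved set-wise by $\tau'$: the hyperelliptic involution $\tau'$ is the $\pi$-rotation about a horizontal axis passing through $e(S_1^1)$, so both $e(S_1^1)$ and its complementary subsurface $S_{2}^2 \setminus e(S_1^1)$ are $\tau'$-invariant (and in particular the separating curve $c$ is isotopic to a $\tau'$-invariant representative). Hence the restriction $\iota := \tau'|_{e(S_1^1)}$ pulls back via $e$ to an orientation-preserving order-$2$ self-homeomorphism of $S_1^1$. A Riemann--Hurwitz computation forces $\iota$ to have three fixed points, so $\iota$ is the standard hyperelliptic involution on $S_1^1$.

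I will then invoke the well-known fact that the hyperelliptic involution on $S_1^1$ preserves the isotopy class of every non-peripheral simple closed curve: after capping off $c$ by a disc, $\iota$ extends to the hyperelliptic involution on the closed torus $T^2$, which acts as $-I$ on $H_1(T^2;\mathbb{Z})$ and hence fixes every primitive homology class up to sign. Since isotopy classes of essential unoriented simple closed curves on $T^2$ are in bijection with primitive elements of $H_1(T^2;\mathbb{Z})$ up to sign, $\iota$ fixes each such isotopy class; we may therefore choose the generators $a,b$ so that $\iota(a)=a$ and $\iota(b)=b$ as isotopy classes of unoriented curves in $S_1^1$.

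Combining these, and using the standard conjugation formula together with the fact that $\tau'$ is orientation-preserving, we compute in $\Mod(S_2^2)$:
\[
\tau' \cdot E(T_a) \cdot \tau'^{-1} \;=\; T_{\tau'(e(a))} \;=\; T_{e(\iota(a))} \;=\; T_{e(a)} \;=\; E(T_a),
\]
and identically for $T_b$. Since $E(T_a)$ and $E(T_b)$ generate $E(\Mod(S_1^1))$, the whole image centralizes $\tau'$, as claimed. The only step that requires any care is the geometric assertion that $e$ can be chosen $\tau'$-equivariantly so that $\iota$ is the hyperelliptic involution of $S_1^1$; this is the main (mild) obstacle, and it is immediate from the symmetric placement of $e(S_1^1)$ in Figure 5.
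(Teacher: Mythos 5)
Your argument is correct and takes essentially the same route as the paper: you identify $\tau'|_{e(S_1^1)}$ with the elliptic involution of $S_1^1$ and deduce that it commutes with every Dehn twist generator (and hence with all of $\Mod(S_1^1)$) because it preserves every essential unoriented curve class, via the $-I$ action on $H_1$. The paper simply cites this last step as the classical fact that $\Mod(S_1^1)$ commutes with the elliptic involution (referring to the discussion of centers in Farb--Margalit); you have just unpacked that citation.
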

\begin{proof}
It is classical that $\Mod(S_{1}^1)$ commutes with the elliptic involution. Therefore the embedding image $E(\Mod(S_{1}^1))$ commutes with the hyper-elliptic involution $\tau$. See \cite[Page 75-77]{FM} about centers of mapping class groups.
\end{proof}
Therefore, we obtain the following theorem which is the same as \cite[Theorem 4.1]{Mar}.
\begin{thm}\label{acyclic}
There exists an admissible decomposition of $S_{2;2}$ for $F$ with the following property: there exists a simple closed curve $\alpha$ homotopic to $c$ such that if $p\in S_{2;2}$ belongs to the torus minus a disc (which is one of the two components obtained after removing $\alpha$ from $S_{2,2}$), then the element of the decomposition that contains $p$ is acyclic. 
\end{thm}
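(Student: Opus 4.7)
The plan is to adapt the proof of Markovic's \cite[Theorem 4.1]{Mar} to the $\tau$-equivariant setting at hand, exploiting the embedding $E$ from the preceding lemma and the centrality of $T_c$ in $\Mod(S_1^1)$. The first step is to observe that the support of $T_{b^1}T_{b^2}$ in $\Mod(S_2^2)$ is disjoint from the subsurface $e(S_1^1)$, so $E(\Mod(S_1^1))$ commutes with $T_{b^1}T_{b^2}$ as mapping classes; combined with the lemma above, applying $\rho'$ yields a subgroup $H := \rho'(E(\Mod(S_1^1))) \subset \Homeo(S_{2;2})(\tau)$ that commutes with $F$ in $\Homeo(S_{2;2})$. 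Since $T_c$ generates the center of $\Mod(S_1^1)\cong B_3$, the element $\rho'(E(T_c))$ lies in the center of $H$ and commutes with $F$ as well.

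Next I would form the subgroup $\widetilde H := \langle F, H\rangle \subset \Homeo(S_{2;2})$ and take its minimal decomposition $\mathbf{S}^{\widetilde H}$ provided by the existence theorem. Since $F\in \widetilde H$, this is automatically an admissible decomposition for $F$; the content of the theorem is then the acyclicity statement on the torus side. For that I would mirror Markovic's Schoenflies-style analysis applied to the richer subgroup $\widetilde H$ instead of just the cyclic $\langle F\rangle$. The key dynamical input comes from passing to the quotient $\Mod(S_1^1)/\langle T_c\rangle \cong PSL(2,\mathbb{Z})$, which has torsion of orders $2$ and $3$: after neutralizing the central element $\rho'(E(T_c))$ via a preliminary Moore-type collapse along its minimal decomposition, these produce honest finite-order homeomorphisms on a quotient space, whose fixed-point structure is rigidly pinned down by Ahlfors' trick. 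A Kerekj{\'a}rt{\'o}--Brouwer analysis then forces any element of $\mathbf{S}^{\widetilde H}$ lying inside $e(S_1^1)$ to be preserved setwise by multiple rotations simultaneously, which constrains it to lift to a compact subset of the universal cover of $S_{2;2}$ and hence to be acyclic.

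The cutting curve $\alpha$ arises as the simple closed $\widetilde H$-invariant curve along which the central element $\rho'(E(T_c))$ concentrates its dynamics: by admissibility and the structure of $H$ it can be chosen homotopic to $c$, separating $S_{2;2}$ into the desired torus-minus-disc piece and its complement. The main obstacle will be the bootstrapping needed to handle the central element $\rho'(E(T_c))$ itself, since the acyclicity of its own minimal decomposition is essentially what we are simultaneously trying to establish. I would resolve this circularity in the manner of Markovic, by an iterative argument that first produces a weak admissible refinement using only the subgroup $H$ and then upgrades it using commutativity with $F$; the $\tau$-equivariance is automatic throughout, since both $F$ and $H$ lie in $\Homeo(S_{2;2})(\tau)$ and the minimal decomposition is canonical.
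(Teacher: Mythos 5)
The paper proves this theorem by a completely different route, and your proposal has a genuine gap that the paper's route is specifically designed to avoid. The paper takes an Anosov automorphism $A'$ of the $2$-torus, blows up a fixed point to get a map of $S_1^1$, extends by the identity to $A\in\Homeo(S_2^2)$, and observes that $[A]$ commutes with $T_{b^1}T_{b^2}$ (which is supported outside $e(S_1^1)$). The \emph{global shadowing property} of Anosov diffeomorphisms together with Markovic's Lemma 4.14 then shows that $F$ preserves each element of an explicit decomposition coming from $\rho'([A])$, and the dynamics of the Anosov map give the acyclicity statement directly. The entire point of this construction is that it supplies an admissible decomposition \emph{unconditionally} --- no prior information about the decomposition of any other homeomorphism is required.

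Your proposal instead tries to get traction from the torsion in $\Mod(S_1^1)/\langle T_c\rangle\cong\mathrm{PSL}(2,\mathbb{Z})$ via Ahlfors' trick, after Moore-collapsing along the minimal decomposition of $\rho'(E(T_c))$. You correctly identify the circularity: to invoke Moore's theorem on that collapse you need to know the elements of the minimal decomposition of $\rho'(E(T_c))$ are acyclic, which is a statement of exactly the same kind and difficulty as what you are trying to prove for $F$. Your proposed fix --- ``an iterative argument that first produces a weak admissible refinement using only the subgroup $H$ and then upgrades it'' --- is not actually an argument; nothing in Markovic's paper supplies such a bootstrap without an external dynamical input. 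The reason the Anosov map is needed is precisely that it breaks this circle: its shadowing lemma is an a priori statement about $A'$ alone and does not presuppose acyclicity of anything. The Ahlfors'-trick-plus-torsion machinery you invoke is indeed used in this paper, but only \emph{after} Theorem~\ref{key} is established and a genuine realization of $\Mod(S_{0;6,1})$ is in hand (Sections 2 and 3); deploying it to prove Theorem~\ref{acyclic} itself puts the cart before the horse. To repair the argument you would need to supply some concrete dynamical representative, which is exactly the Anosov construction you omitted.
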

\begin{proof}[Sketch proof]
We use the same Anosov map $A'$ on $2$-torus as in \cite[Theorem 4.1]{Mar} and blow it up at the fixed point and extend to identity outside of $e(S_1^1)$ to obtain $A\in \Homeo(S_{2}^2)$. Let $[A]$ be the corresponding mapping class. We know that $[A]$ commutes with $T_{b^1}T_{b^2}$ and $T_{b^1}T_{b^2}$ is identity on the subspace $S_{1}^1$. By the global shadowing property of Anosov flow and \cite[Lemma 4.14]{Mar}, the homeomorphism $F$ setwise preserves each element of the corresponding decomposition of $\rho'([A])$ which has the property stated in the theorem. 
\end{proof}
We now prove the following lemma which is similar to \cite[Lemma 5.1]{Mar}.
\begin{lem}
The minimal decomposition of $F$ consists of acyclic elements.
\end{lem}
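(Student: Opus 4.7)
The plan is to apply Theorem~\ref{acyclic} several times, with different $\tau$-equivariant embeddings $e_i:S_1^1\hookrightarrow S_2^2$, in order to manufacture a finite family of admissible decompositions of $F$ whose ``acyclic regions'' jointly cover all of $S_{2;2}$. The minimal decomposition $\mathbf{S}'$ refines every admissible decomposition of $F$, so each element of $\mathbf{S}'$ will lie inside an element of each member of the family. A closed, connected, non-separating subset of an acyclic set is again acyclic (its lift to the universal cover is a closed subset of a compact set, hence compact), which will force every element of $\mathbf{S}'$ that meets at least one acyclic region to be acyclic.

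First I would apply Theorem~\ref{acyclic} to the embedding $e:S_1^1\to S_2^2$ already fixed, obtaining an admissible decomposition $\mathbf{D}_1$ of $F$ and a curve $\alpha_1\subset S_{2;2}$ (isotopic to a lift of $c$) so that every element of $\mathbf{D}_1$ meeting the one-holed-torus component $R_1$ of $S_{2;2}\setminus\alpha_1$ is acyclic. Next, to handle the complementary region, I would choose additional $\tau$-equivariant embeddings $e_i:S_1^1\to S_2^2$ for $i=2,\dots,k$ so that the resulting one-holed-torus regions $R_i\subset S_{2;2}$ satisfy $\bigcup_{i=1}^k R_i=S_{2;2}$; a natural economical choice is $e$, its $\tau$-conjugate (which is again $\tau$-equivariant because $\tau$ commutes with $F$), and possibly one further embedding whose interior surrounds the two marked points $P_0^1,P_0^2$. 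The preceding lemma ensures each such embedding lands inside $\Mod(S_2^2)(\tau')$, so that the Anosov blow-up construction in the proof of Theorem~\ref{acyclic} applies verbatim and yields admissible decompositions $\mathbf{D}_i$ whose acyclic regions contain $R_i$. Finally, given any $S\in\mathbf{S}'$, minimality places $S$ inside an element of each $\mathbf{D}_i$; choosing $i$ with $S\cap R_i\neq\emptyset$, the containing element of $\mathbf{D}_i$ meets $R_i$ and is therefore acyclic, so $S$ itself is acyclic.

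The main obstacle will be the second step: producing the auxiliary family $\{e_i\}$ and verifying that the associated one-holed-torus regions cover $S_{2;2}$. One must arrange that each Anosov blow-up still commutes with both $\tau$ and $F=\rho'(T_{b^1}T_{b^2})$, so that the global shadowing argument of \cite[Lemma~4.14]{Mar} applies and produces a genuine admissible decomposition, and one must confirm coverage near the six Weierstrass fixed points of $\tau$ and near the marked points $P_0^1,P_0^2$. These details are closely modeled on the proof of \cite[Lemma~5.1]{Mar}; the new wrinkle here is the careful bookkeeping of the hyperelliptic symmetry $\tau$ and of the two marked points (rather than the boundary components of \cite{Mar}) throughout the construction.
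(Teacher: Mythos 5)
Your covering strategy has a fundamental obstruction that the paper circumvents by an entirely different mechanism. The acyclic regions produced by Theorem~\ref{acyclic} are always one-holed tori in $S_{2;2}$ that avoid the two marked points $P_0^1,P_0^2$. This is forced: the Anosov blow-up construction requires an embedding $e_i:S_1^1\hookrightarrow S_2^2$ on whose image $T_{b^1}T_{b^2}$ restricts to the identity, so $e_i(S_1^1)$ must be disjoint from the boundary components $b^1,b^2$ of $S_2^2$, and therefore its image in $S_{2;2}$ is disjoint from the marked points $P_0^1,P_0^2$ (which are precisely what $b^1,b^2$ become after collapsing). Your suggestion of ``one further embedding whose interior surrounds $P_0^1,P_0^2$'' is thus impossible: any admissible $e_i$ keeps $P_0^1,P_0^2$ in the complementary component. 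Even playing with $\tau$-conjugates does not help since, as the figures show, $\tau'$ already preserves $e(S_1^1)$ up to isotopy, and the second handle still leaves the waist region around $P_0^1,P_0^2$ uncovered. So the regions $R_i$ can never cover all of $S_{2;2}$, and the minimality/refinement step cannot conclude acyclicity for elements near $P_0^1,P_0^2$. (Your small auxiliary claim — that a closed, connected, non-separating subset of an acyclic set is acyclic — is fine via compact lifts, but it does not rescue the coverage problem.)

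The paper's actual argument is different in kind. Rather than trying to cover the surface with acyclic regions, it sets $M_F$ to be the union of all acyclic elements of $\mathbf{S}'$, takes the connected component $M_F'$ containing the one-holed torus from Theorem~\ref{acyclic}, and notes (via \cite[Proposition 2.1]{Mar}) that $M_F'$ is an open subsurface with finitely many ends. If $M_F'\ne S_2$, one picks a nested sequence $\beta_n$ determining an end and a $\tau$-invariant simple closed curve $\delta$ (avoiding $b^1,b^2$) with $i(\delta,\gamma)\ne0$ and $i(\delta,\beta_n)\ne0$, where $\gamma\subset M_F'$ is homotopic to $c$. Since $\rho'(T_\delta)$ commutes with $F$, it permutes components of $M_F$; the nonzero intersection $i(T_\delta(\gamma),\gamma)$ forces $\rho'(T_\delta)(\gamma)\subset M_F'$, which contradicts $i(T_\delta(\gamma),\beta_n)\ne0$ for all $n$. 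In short, the paper needs only one acyclic seed region and then propagates acyclicity across the whole surface using the group action and connectedness — the step your proposal is missing. You would need to replace the multi-embedding covering idea with this (or an equivalent) propagation argument.
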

\begin{proof}
The set of all points $p\in S_2$ such that the corresponding element $S_p\in \mathbf{S}'$ is acyclic is denoted by $M_F$. By the definition of minimal decomposition, $x\in M_F$ if and only if there exists an admissible decomposition such that $x$ belong to an acyclic element. Therefore $M_F$ contains the torus minus a disc in Theorem \ref{acyclic}. Let $M_F'$ be the connected component of $M_F$ that contains this torus minus a disc. By \cite[Proposition 2.1]{Mar}, the subset $M_F'$ is an open subsurface with finitely many ends. 

If $M_F'\neq S_2$, then let $\beta_n$ be a nested sequence that determines one end $K$ of $M_F'$. By Theorem \ref{acyclic}, there exists a simple closed curve $\gamma\subset M_F'$ such that $\gamma$ is homotopic to $c$. Since the center of $\Mod(S_2)$ is generated by hyper-elliptic involution $\tau$, every curve in $S_2$ has a $\tau$-invariant representative. 

Let $\delta'$ be a simple closed curve in $S_{2}$ such that $i(\delta',\gamma)\neq 0$ and $i(\delta', \beta_n)\neq 0$ where $i(\_,\_)$ denotes the geometric intersection number. Find a $\tau$-invariant representative $\delta$ of $\delta'$ that avoids $b^1,b^2$. Then the mapping class $T_\delta\in \Mod(S_2^2)$ satisfies that $i(T_\delta(\gamma),\beta_n)\neq 0$ and  $i(T_\delta(\gamma),\gamma)\neq 0$ on $S_2$.

Since $\rho'(\Mod(S_2^2))$ commutes with $F$, we know that $\rho'(\Mod(S_2^2))$ permutes connected components of $M_F$. Therefore $\rho'(T_\delta)(\gamma)$ is either contained in $M_F'$ or is disjoint from $M_F'$. However $i(T_\delta(\gamma),\gamma)\neq 0$ rules out the possibility that $\rho'(T_\delta)(\gamma)$ is disjoint from $M_F'$. Therefore $\rho'(T_\delta)(\gamma)\subset M_F'$. This contradicts to the fact that $\rho'(T_\delta)$ intersect each curve $\beta_n$ in the nested sequence converging to one end $K$. 
\end{proof}
\begin{lem}
Marked points do not belong to the same element in $p(\mathbf{S}')$.
\end{lem}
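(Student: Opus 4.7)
The plan is a proof by contradiction. Suppose two distinct marked points $p_i, p_j \in S_{0;6,1}$ lie in a common element $p(S)$ of $p(\mathbf{S}')$, for some $S \in \mathbf{S}'$. I would first use $\tau$-symmetry plus a half-twist propagation to force all six Weierstrass points $P_1,\ldots,P_6 \in S_{2;2}$ into the single element $S$, and then run Moore's theorem together with Ahlfors' trick on the descended involution to obtain a fixed-point count that contradicts this.

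For the first step, each Weierstrass $P_l$ is $\tau$-fixed, so the hypothesis $P_l \in S \cup \tau(S)$ forces $P_l \in S \cap \tau(S)$, whence $S = \tau(S)$ by disjointness of $\mathbf{S}'$. Similarly, if $p_0 \in p(S)$ then some $P_0^\alpha \in S$, and then $P_0^{3-\alpha} = \tau(P_0^\alpha) \in \tau(S) = S$. In either case there is at least one $j\in\{1,\ldots,6\}$ with $P_j \in S$ and $S = \tau(S)$. For each $k \neq j$, I would form the Birman--Hilden lift $\widetilde{\sigma}_{jk} \in \Mod(S_2^2)(\tau')$ of the half-twist $\sigma_{jk} \in \Mod(S_{0;6}^1)$, realized by a Dehn twist about the $\tau'$-invariant curve $p'^{-1}(\text{arc from } p_j \text{ to } p_k)$. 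The homeomorphism $\rho'(\widetilde{\sigma}_{jk})$ commutes with $F$ and hence permutes $\mathbf{S}'$; it commutes with $\tau$ and hence permutes the Weierstrass set; and by the Birman--Hilden identification it swaps $P_j \leftrightarrow P_k$ while fixing every other $P_l$ and both marked points $P_0^1, P_0^2$. Thus $\rho'(\widetilde{\sigma}_{jk})(S)$ shares at least one of these fixed points with $S$, so disjointness of $\mathbf{S}'$ gives $\rho'(\widetilde{\sigma}_{jk})(S) = S$ and consequently $P_k \in S$. Letting $k$ vary yields $\{P_1,\ldots,P_6\} \subset S$.

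For the second step, I would invoke Moore's theorem (using the previous lemma that $\mathbf{S}'$ consists of acyclic elements) to obtain a continuous $\phi : S_{2;2} \to S_{2;2}$ homotopic to the identity with $\phi^{-1}(q) \in \mathbf{S}'$ for every $q$. Since $\tau$ preserves $\mathbf{S}'$, it descends to a continuous involution $\bar{\tau}$ of $S_{2;2}$ satisfying $\phi\tau = \bar{\tau}\phi$; being its own continuous inverse, $\bar{\tau}$ is a homeomorphism, and composing with the homotopy $\phi \simeq \mathrm{id}$ yields $\bar{\tau} \simeq \tau$, so $[\bar{\tau}] = [\tau]$ in $\Mod(S_{2;2})$. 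By Ahlfors' trick $\bar{\tau}$ is conjugate to $\tau$ in $\Homeo_+(S_{2;2})$ and therefore has exactly six fixed points. On the other hand, $\bar{\tau}$ fixes $\phi(x)$ if and only if the element of $\mathbf{S}'$ containing $x$ is $\tau$-invariant, and distinct $\tau$-invariant elements produce distinct $\bar{\tau}$-fixed points. By Smith theory for order-$2$ actions on compact acyclic sets, every $\tau$-invariant element of $\mathbf{S}'$ must meet $\Fix(\tau) = \{P_1,\ldots,P_6\}$. Since all six Weierstrass points are already inside the single element $S$, disjointness forces $S$ to be the unique $\tau$-invariant element of $\mathbf{S}'$. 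Hence $\bar{\tau}$ has only one fixed point, contradicting the six obtained from Ahlfors.

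The most delicate part will be the third paragraph: rigorously establishing that $\bar{\tau}$ is a homeomorphism lying in the mapping class of $\tau$ (so that Ahlfors' uniqueness applies to it) and correctly invoking Smith theory on the potentially wild acyclic elements of $\mathbf{S}'$ to guarantee that every $\tau$-invariant element contains a Weierstrass point. The half-twist bookkeeping in the second paragraph should be routine once the Birman--Hilden lift is pinned down, and the opening symmetry reductions are essentially automatic from $\tau$-invariance of Weierstrass points and disjointness of $\mathbf{S}'$.
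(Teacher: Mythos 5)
Your proposal is correct, but it takes a genuinely different route from the paper. Both arguments begin the same way: if two marked points share an element of $p(\mathbf{S}')$, then disjointness of $\mathbf{S}'$ and $\tau$-invariance of the Weierstrass points force $S=\tau(S)$ for some $S\in\mathbf{S}'$, and the action of $\rho'$ on the (point-wise permuted) special points propagates Weierstrass points into $S$. The paper stops this propagation as soon as $P_1,P_2\in S$; you push it (via the half-twist lifts) until all of $P_1,\dots,P_6\in S$. From there the two proofs diverge. The paper constructs a $\tau$-invariant, simply connected open $U(S)\supset S$ which is a union of elements of $\mathbf{S}'$, picks a simple arc $\alpha\subset p(U(S))$ joining $p_1$ to $p_2$, and observes $p^{-1}(\alpha)$ is a nontrivial simple closed curve inside the simply connected set $U(S)$ (using that $p^{-1}(p(U(S)))=U(S)$ by $\tau$-invariance), a contradiction. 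You instead apply Moore's theorem to collapse $\mathbf{S}'$, descend $\tau$ to an involution $\bar\tau$, argue $[\bar\tau]=[\tau]$, invoke Ahlfors' trick to get six fixed points for $\bar\tau$, and then use Smith theory on compact acyclic elements to conclude $S$ is the unique $\tau$-invariant element, forcing $\bar\tau$ to have a single fixed point. Both close the argument; the paper's is more elementary (no Moore collapse, no Smith theory, no mapping-class identification of the descended map), while yours is more dynamical and closer in flavor to the Ahlfors-trick arguments used elsewhere in the paper. The points you correctly flag as delicate---that $\bar\tau$ lands in the hyperelliptic mapping class of the quotient (one must also deal with whether $P_0^1,P_0^2$ coalesce, i.e.\ whether the quotient is $S_{2;2}$ or $S_{2;1}$), and that Smith theory applies to the wild acyclic elements---are real but resolvable: the paper's definition of acyclic (lifts compactly to the universal cover with annular complement in a disk) gives Čech $\mathbb{Z}/2$-acyclicity, so Smith's theorem does apply; and the mapping-class identification of $\bar\tau$ is of the same nature as what Markovic already handles in establishing the minimal decomposition framework. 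Still, the paper's arc-lifting argument sidesteps all of this.
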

\begin{proof}
Points are named in Figure \ref{p1} and \ref{p2}. If two marked points belong to one element in $p(\mathbf{S}')$, we claim that there exists $S\in \mathbf{S}'$ such that $P_i,P_j\in S$ for $1\le i\neq j\le 6$. If there exists $S\in \mathbf{S}'$ such that $P_0^1,P_i\in S$ for $1\le i\le 6$ and $S\in \mathbf{S}'$, then since $\tau$ permutes elements of $\mathbf{S}'$, we know that $P_0^2,P_i\in S$. Let $f\in \Mod(S_2^2)(\tau)$ be a mapping class that permutes $P_i,P_j$. Since $\rho'(f)$ preserves the set $\{P_0^1,P_0^2\}$, we know that $\rho'(f)$ preserves $S$ as well. Therefore $P_i,P_j\in S$ for $1\le i\neq j\le 6$. Therefore without loss of generality, we assume that there exists $S\in \mathbf{S}'$ such that $P_1,P_2\in S$.

Since $\tau$ preserves the minimal decomposition $\mathbf{S}'$ and $S$ is acyclic, there exists an open neighborhood $U$ of $S$ consisting of elements of $\mathbf{S}'$ such that $U$ is simply-connected and non-separating. Denote by $U(S)$ the connected component of $U\cap \tau(U)$ that contains $S$. Since $\tau$ permutes elements in $\mathbf{S}'$ and $U$ consists of elements in $\mathbf{S}'$, we know that $\tau(U)\cap U$ consists of elements in  $\mathbf{S}'$. Since element in $\mathbf{S}'$ is connected, a connected component of $\tau(U)\cap U$ also consists of elements in $\mathbf{S}'$. Since each connected component of the intersection of two simply connected open subset on a surface is also simply connected, we know that $U(S)$ is simply connected, open, a union of elements in $\mathbf{S}'$ and satisfies that $S\subset U(S)$ and $\tau(U(S))=U(S)$. 

Since $S$ contains $P_1,P_2$, we know that $U(S)$ contains $P_1,P_2$ as well. Therefore the projection $p(U(S))$ contains a simple arc $\alpha$ connecting $p_1,p_2$. Since $U(S)$ is open, we can choose the arc $\alpha$ such that $\alpha$ does not pass other marked points. However, the pre-image of $\alpha$ is a nontrivial loop. This can be seen from the fact that first of all, the pre-image only depends on the isotopy type of the arc $\alpha$. At least one simple arc connecting two marked points has pre-image a nontrivial loop. Since $\Mod(S_{0;6}^1)$ acts transitively on simple arcs connecting two marked points by change of coordinate principle \cite[Chapter 1.3]{FM}, we know that the pre-image of $\alpha$ is nontrivial as well. 
\end{proof}
Now we have all we need to prove Theorem \ref{key}
\begin{proof}[Proof of Theorem \ref{key}]
Since the image of each element of $\mathbf{S}'$ under $p$ is also connected and closed, we know that the minimal decomposition of $\rho(T_b)$ satisfies the property as stated in Theorem \ref{key} because $p(\mathbf{S}')$ does.
\end{proof}

    	\bibliography{braidlifting}{}

\begin{thebibliography}{BCS13}

\bibitem[BCS13]{BCS}
M.~Bestvina, T.~Church, and J.~Souto.
\newblock Some groups of mapping classes not realized by diffeomorphisms.
\newblock {\em Comment. Math. Helv.}, 88(1):205--220, 2013.

\bibitem[BH73]{BH}
J.~Birman and H.~Hilden.
\newblock On isotopies of homeomorphisms of {R}iemann surfaces.
\newblock {\em Ann. of Math.}, 97(2):424--439, 1973.

\bibitem[Bro19]{Br}
L.~Brouwer.
\newblock {\"U}ber die periodischen {T}ransformationen der {K}ugel.
\newblock {\em Math. Annalen}, 80(1):39--41, 1919.

\bibitem[CK94]{CK}
A.~Constantin and B.~Kolev.
\newblock The theorem of {K}er{\'e}kj{\'a}rt{\'o} on periodic homeomorphisms of
  the disc and the sphere.
\newblock {\em Enseign. Math.}, 40:193--193, 1994.

\bibitem[CS18]{CS}
L.~Chen and N.~Salter.
\newblock Section problems for configurations of points on the {R}iemann
  sphere.
\newblock Preprint: https://arxiv.org/abs/1807.10171, 2018.

\bibitem[Eil34]{Ei}
S.~Eilenberg.
\newblock Sur les transformations p{\'e}riodiques de la surface de sphere.
\newblock {\em Fundamenta Mathematicae}, 22(1):28--41, 1934.

\bibitem[FH09]{FH}
J.~Franks and M.~Handel.
\newblock Global fixed points for centralizers and {M}orita's theorem.
\newblock {\em Geom. Topol.}, 13(1):87--98, 2009.

\bibitem[FM12]{FM}
B.~Farb and D.~Margalit.
\newblock {\em A primer on mapping class groups}, volume~49 of {\em Princeton
  Mathematical Series}.
\newblock Princeton University Press, Princeton, NJ, 2012.

\bibitem[Ker83]{Kerk}
Steven~P. Kerckhoff.
\newblock The {N}ielsen realization problem.
\newblock {\em Ann. of Math. (2)}, 117(2):235--265, 1983.

\bibitem[Mar07]{Mar}
V.~Markovic.
\newblock Realization of the mapping class group by homeomorphisms.
\newblock {\em Invent. Math.}, 168(3):523--566, 2007.

\bibitem[Mor87]{Mor}
S.~Morita.
\newblock Characteristic classes of surface bundles.
\newblock {\em Invent. Math.}, 90(3):551--577, 1987.

\bibitem[MS08]{MS}
V.~Markovic and D.~Saric.
\newblock The mapping class group cannot be realized by homeomorphisms.
\newblock Preprint: https://arxiv.org/abs/0807.0182, 2008.

\bibitem[MT18]{MT}
K.~Mann and B.~Tshishiku.
\newblock Realization problems for diffeomorphism groups.
\newblock Preprint: https://arxiv.org/abs/1802.00490, 2018.

\bibitem[ST16]{ST}
N.~Salter and B.~Tshishiku.
\newblock On the non-realizability of braid groups by diffeomorphisms.
\newblock {\em Bull. Lond. Math. Soc.}, 48(3):457--571, 2016.

\bibitem[vK19]{Ker2}
B.~von Ker\'{e}kj\'{a}rt\'{o}.
\newblock \"{U}ber die periodischen {T}ransformationen der {K}reisscheibe und
  der {K}ugelfl\"{a}che.
\newblock {\em Math. Ann.}, 80(1):36--38, 1919.

\end{thebibliography}
	\bibliographystyle{alpha}
	
California Institute of Technology 

Department of Mathematics 

Pasadena, CA 91125, USA

E-mail: chenlei1991919@gmail.com

\end{document}